\def\Q{\mathbb{Q}}
\def\R{\mathbb{R}}
\def\Z{\mathbb{Z}}
\def\row#1#2{{#1}_1,\ldots ,{#1}_{#2}}
\def\brow#1#2{{\bf {#1}}_1,\ldots ,{\bf {#1}}_{#2}}
\def\srow#1#2{{#1}_2,,\ldots ,{#1}_{#2}}
\def\blcomb#1#2#3{{#1}_1{\bf {#2}}_1+\cdots +{#1}_{#3}{\bf {#2}}_{#3}}
\def\2vec#1#2{\left(\begin{array}{c}{#1}\\{#2}\end{array}\right)}
\def\mod#1{\ \hbox{\rm (mod $#1$)}}
\def\blcomb#1#2#3{{#1}_1{\bf {#2}}_1+{#1}_2{\bf {#2}}_2+\cdots +{#1}_{#3}{\bf {#2}}_{#3}}
\def\twomat#1#2#3#4{\left(\begin{array}{rr}
{#1} & {#2}\\ {#3} & {#4}\end{array}\right)}
\def\blcomb#1#2#3{{#1}_1{\bf {#2}}_1+{#1}_2{\bf {#2}}_2+\cdots +{#1}_{#3}{\bf {#2}}_{#3}}
\def\twomat#1#2#3#4{\left(\begin{array}{cc}
{#1} & {#2}\\ {#3} & {#4}\end{array}\right)}
\newtheorem{theorem}{Theorem}
\newtheorem{corollary}{Corollary}
\newtheorem{lemma}{Lemma}
\newtheorem{proposition}{Proposition}
\newtheorem{example}{Example}
\newtheorem{definition}{Definition}
\newtheorem{problem}{Problem}
\title{\bf Weighted and Roughly Weighted Simple Games}
\author{\bf Tatiana Gvozdeva and Arkadii Slinko}
\date{\today}
\begin{document}

\maketitle

\begin{abstract}
This paper contributes to the program of numerical characterisation and classification of simple games outlined in the classical monograph of  von Neumann and Morgenstern \cite{vNM}. 
One of the most fundamental questions of this program is what makes a simple game a weighted majority game. The necessary and sufficient conditions that guarantee weightedness
were obtained by Elgot \cite{Elgot} and refined by Taylor and Zwicker \cite{TZ92}.
If a simple game does not have
weights, then rough weights may serve as a reasonable substitute
(see their use in Taylor and Zwicker \cite{TZ}). A simple game  is  roughly weighted if there exists a system of weights and a threshold such that all coalitions whose combined weight is above the threshold are winning and all coalitions whose combined weight is below the threshold are
losing and a tie-breaking is needed to classify the coalitions whose combined weight is exactly the threshold. Not all simple games are roughly weighted, and the class of projective games \cite{Rich} is a prime example.

In this paper we give necessary and sufficient conditions for a simple game to have rough weights. We define two functions $f(n)$ and $g(n)$ that measure the deviation of a simple game from a weighted majority game and roughly weighted majority game, respectively. We formulate known results in terms of lower and upper bounds for these functions and improve those bounds.
We also investigate rough weightedness of simle games with a small number of players.
\end{abstract}

\section{Introduction}
\label{intro}

In the classical book \cite{vNM} von Neumann and Morgenstern outlined the programme of numerical classification and characterisation of all simple games.\footnote{See Section 50.2.1, page 433 of the Third Edition.} They viewed the introduction of weighted majority games as the first step in this direction.  They noted however\footnote{Section 5.3 of the same book} that already for six players not all games have weighted majority representation and they also noted that for seven players some games do not have weighted majority representation in a much stronger sense. 
Therefore one of the most fundamental questions of this programme is to find out what makes a simple game a weighted majority game. The next step is to measure the deviation of an arbitrary game from a weighted majority game in terms of a certain function $f(n)$ of the number of players $n$ and to obtain lower and upper bounds for this function. 

The necessary and sufficient conditions that guarantee weightedness of a game 
are known. Elgot \cite{Elgot} obtained them in terms of asummability. Taylor and Zwicker \cite{TZ92} obtained necessary and sufficient conditions later but independently in terms of trading transforms.  The advantage of the latter characterisation is that it is constructive in the sense that only finitely many conditions (which depends on the number of players) has to be checked to decide if the game is weighted or not. More precisely, they showed that a simple game is
weighted majority game if no sequence of winning coalitions up to the  length $2^{2^n}$ can be converted into a sequence of losing
coalitions by exchanging players. 

The sequence of coalitions
\begin{equation}
\label{Tt}
{\cal T}=(\row Xj;\row Yj)
\end{equation}
is called a trading transform if the coalitions $\row Xj$ can be converted into the coalitions $\row Yj$ by rearranging players. If game $G$ with $n$ players does not have weights, then the characterisation of Taylor and Zwicker implies that there exists a trading transform (\ref{Tt}) where all $\row Xj$ are winning and all $\row Yj$ are losing. We call such a trading transform a certificate of non-weightedness. We denote the minimal length $j$ of such a certificate ${\cal T}$ as $f(G)$ and set $f(n)=\max_G f(G)$, where $G$ runs over all games without weighted majority representation. A discussion about the nature of this function is in order. If $f(G)=2$, then the game $G$ is extremely non-weighted, in fact most games, as noted in \cite{TZ}, are of this kind. And it is easy to find a certificate of their non-weightedness. Games with the condition $f(G)>2$ behave in some respects as weighted games, in particular the desirability relation on singletons is a weak order (this is why they are called complete games \cite{CF,TZ}). For such games Carreras and Freixas  \cite{CF,TZ,FM} obtained a useful classification result. So the larger $f(G)$ the closer the game $G$ to weighted majority games. And it is not surprising that it gets more and more difficult to find a certificate of their non-weightedness. It is important to know what is the maximal length of certificates that has to be checked in order to declare that $G$ is weighted. The function $f(n)$ shows exactly this length. This is a complete analogue to the Fishburn's function $f(n)$ defined for linear qualitative probability orders \cite{PF1}.

Many old results can be nicely expressed in terms of this function. In particular the results of Taylor and Zwicker \cite{TZ92,TZ95} (and earlier Gabelman \cite{Gabelman} for small values of $n$) can be presented as  lower and upper bounds for $f(n)$ as follows
\begin{equation}
\label{theirbounds}
\lfloor \sqrt{n}\rfloor \le f(n)\le 2^{2^n}. 
\end{equation}
In this paper we improve both bounds as follows:
\begin{equation}
\label{ourbounds1}
\left\lfloor \frac{n-1}{2}\right\rfloor \le f(n)\le (n+1)2^{\frac{1}{2}n \log_2 n }.
\end{equation}

If a simple game does not have weights, then rough weights may serve as a reasonable substitute (see \cite{TZ}). 
The idea is the same as in the use of tie-breaking in voting in case when only one alternative is to be elected. If the combined weight of a coalition is greater than a certain threshold, then it is winning, if the combined weight is smaller than the threshold, then this coalition is losing. If its weight is exactly the threshold, then it can go either way depending on the ``tie-breaking'' rule. 

In this paper we obtain a necessary and sufficient conditions for the existence of rough weights. 
We prove that a game $G$ is roughly weighted majority game if for no $j$ smaller than ${(n+1)2^{\frac{1}{2}n \log_2 n }}$ there exist a certificate of non-weightedness of length $j$ with the grand coalition among winning coalitions and the empty coalition among losing coalitions. Let us call such certificates {\em potent}. For a game $G$ without rough weights we define by $g(G)$ the lengths of the shortest potent certificate of non-weightedness. Then, a function $g(n)$ can be naturally defined which is fully analogous to $f(n)$. It shows the maximal length of  potent certificates that has to be checked in order to decide if $G$ is roughly weighted or not. One of the main result of this paper can be formulated in terms of function $g(n)$ as follows:
\begin{equation}
\label{ourbounds2}
2n+3 \leq g(n)<(n+1)2^{\frac{1}{2}n \log_2 n }.
\end{equation}
We prove the lower bound by constructing examples. 

We also study rough weightedness of small games. We show that all games with $n\le 4$ players, all strong and proper games with $n\le 5$ players and all constant-sum games with $n\le 6$ players are roughly weighted. Thus the smallest constant sum game that is not roughly weighted is the game with seven players obtained from the Fano plane \cite{vNM}, page 470. This game is the smallest representative of the class of projective games \cite{Rich}. One of the consequences of our characterisation is that all projective games do not have rough weights.


\section{Definitions and examples}
\label{preliminaries}

Let us consider a finite set $P$ consisting of $n$ elements which we will call players.  For convenience $P$ can be taken to be $[n]=\{1,2,\ldots,n\}$.

\begin{definition}
A simple game is a pair $G=(P,W)$, where $W$ is a subset of the power set $2^P$  which satisfies the monotonicity condition: 
\begin{quote}
if $X\in W$ and $X\subset Y\subseteq P$, then $Y\in W$.
\end{quote}
We also require that $W$ is different from $\emptyset$ and $P$ (non-triviality assumption).
\end{definition}

Elements of the set $W$ are called {\em winning coalitions}. We also define the set $L=2^P\setminus W$ and call elements of this set {\em losing coalitions}. A winning coalition is said to be {\em minimal} if every its proper subset is a losing coalition. Due to monotonicity, every simple game is fully determined by the set of its minimal winning coalitions.\par
\medskip

For $X\subseteq P$ we will denote its complement $P\setminus X$ as $X^c$.

\begin{definition}
A simple game is called proper if $X\in W$ implies that $X^c \in L$ and strong if $X\in L$ implies that $X^c \in W$. A simple game which is proper and strong is called a constant sum game.
\end{definition}

In a constant sum game there are exactly $2^{n-1}$ winning coalitions and exactly the same number of losing ones.

\begin{definition}
A simple game $G$ is called {\em weighted majority game} if there exist non-negative reals $\row wn$, and a positive real number $q$, called quota, such that $X\in W$ iff $\sum_{i\in X} w_i\ge q$. Such game is denoted $[q;\row wn]$. We also call $[q;\row wn]$ as a {\em voting representation} for $G$.
\end{definition}




\begin{example}[\cite{FM}]
The UN Security Council consists of five permanent and 10 non-permanent countries. A passage requires approval of at least nine countries, subject to a veto by any one of the permanent members. This is a weighted simple game with a voting representation
\[
[39;7,7,7,7,7,1,1,1,1,1,1,1,1,1,1].
\]
\end{example}

Other interesting examples of simple games can be found in \cite{TZ, FM}. \par\medskip

As von Neumann and Morgenstern showed \cite{vNM} all simple games with less than four, every proper or strong simple game with less than five and every constant sum game with less than six players has a voting representation. For six players the situation is different and there are constant sum games with six players that are not weighted \cite{vNM}. 

\begin{example}
\label{n=6}
Let $n=6$. Let us include in $W$ all sets of cardinality four or greater, $22$ sets in total. We want to construct a proper game, therefore we have to choose and include in $W$ at most one set out of each of the 10 pairs $(X,X^c)$, where $X$ is a subset of cardinality three. Suppose we included sets  $X_1=\{1,2,4\}$, $X_2=\{1,3,6\}$, $X_3=\{2,3,5\}$, $X_4=\{1,4,5\}$, $X_5=\{2,5,6\}$, $X_6=\{3,4,6\}$ in $W$ (and four other 3-element sets to insure that the game is constant sum).  If this game had a voting representation $[q; \row w6]$, then the following system of inequalities must have a solution:
\begin{equation}
\label{1...6}
\sum_{i\in X_j}w_i>\sum_{i\in X_j^c}w_i,\qquad j=1,\ldots,6.
\end{equation}
This system is nevertheless inconsistent. 
\end{example}

However if we convert all six inequalities (\ref{1...6}) into equalities, then there will be a 1-dimensional solution space spanned by $(1,1,1,1,1,1)$ which shows that this game ``almost'' have a voting representation $[3; 1,1,1,1,1,1]$. Indeed, if we assign weight 1 to every player, then all coalition whose weight falls below the threshold 3 are in $L$, all coalitions whose total weight exceeds this threshold are in $W$. However, if a coalition has total weight of three, i.e. it is equal to the threshold, it can be either winning or losing. 

\begin{definition}[ \cite{TZ}, p.78]
A simple game $G$ is called {\em roughly weighted} if there exist non-negative real numbers $\row wn$ and a  real number $q$, called quota, not all equal to zero, such that for $X\in 2^P$ the condition $\sum_{i\in X} w_i< q$ implies $X\in L$, and $\sum_{i\in X} w_i> q$ implies $X\in W$.  We say that $[q;\row wn]$ is  a {\em rough voting representation} for $G$.
\end{definition}

The simple game in Example~\ref{n=6} is roughly weighted with  a rough voting representation $[3;1,1,1,1,1,1]$. We will show later (Theorem~\ref{rough_weights_for 6}) that any constant sum game with six players has a rough voting representation.
 In threshold logic roughly weighted games  correspond to pseudo-threshold functions (see \cite{Mu}, p.208).

\begin{example}[Fano plane game \cite{vNM}]
\label{projective}
Let us denote $P=[7]$ the set of points  of the projective plane of order two, called the Fano plane. Let $P$ be the set of players of the new game. Let us also take the seven lines of this projective plane as minimal winning coalitions:
\begin{equation}
\label{lines}
\{1,2,3\},\ \{3,4,5\},\ \{1,5,6\},\ \{1,4,7\},\ \{2,5,7\},\ \{3,6,7\},\ \{2,4,6\}.
\end{equation}
We will denote them by $\row X7$, respectively.
This, as it is easy to check, defines a constant sum game, which we will denote $Fano$. If it had a rough voting representation $[q; \row w7]$, then the following system of inequalities will be consistent:
\begin{equation}
\label{1...7}
\sum_{i\in X_j}w_i\ge\sum_{i\in X_j^c}w_i,\qquad j=1,\ldots,7.
\end{equation}
However adding all the equations up we get $\sum_{i=1}^7w_i\le 0$ which shows that this system does not have solutions with non-negative coordinates other than the zero solution. Since all weights are equal to zero, by the definition, the threshold must non-zero, coalitions (\ref{lines}) cannot be winning. Hence this simple game is not roughly weighted.
\end{example}



\section{Games and Ideals}

Firstly we would like to redefine trading transforms algebraically.  Let $T=\{-1,0,1\}$ and $T^n=T\times T\times \ldots T$ ($n$ times). With any pair $(X,Y)$ of subsets $X,Y\in [n]$ we define
\[
{\bf v}_{X,Y}=\chi(X)-\chi(Y)\in T^n,
\]
where $\chi(X)$ and $\chi(Y)$ are the characteristic vectors of subsets $X$ and $Y$, respectively. 

Let now $G=(P,W)$ be a game. We will associate an algebraic object with $G$.  For any pair $(X,Y)$, where $X$ is winning and $Y$ is losing, we put in correspondence the following vector
$
{\bf v}_{X,Y}.
$
The set of all such vectors we will denote $I(G)$.

\begin{definition}
Let ${\bf e}_i=(0,\ldots, 1,\ldots,0)$, where the only nonzero element 1 is in the $i$th position. Then a subset $I\subseteq T^n$ will be called an {\em ideal} in $T^n$ if for any $i=1,2,\ldots, n$
\begin{equation}
\label{plus_e_i}
({\bf v}\in I\ \text{and ${\bf v}+{\bf e}_i\in T^n) \Longrightarrow {\bf v}+{\bf e}_i\in I$}.
\end{equation}
\end{definition}

\begin{proposition}
\label{I(G)}
Let $G$ be a game with $n$ players. Then $I(G)$ is an ideal in $T^n$.
\end{proposition}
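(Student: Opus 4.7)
The plan is to unpack the definitions and check that the ideal property follows directly from the monotonicity of $W$. Given ${\bf v}_{X,Y} \in I(G)$ with $X$ winning and $Y$ losing, recall that the $i$-th coordinate of ${\bf v}_{X,Y} = \chi(X) - \chi(Y)$ is $+1$ if $i \in X \setminus Y$, is $-1$ if $i \in Y \setminus X$, and is $0$ otherwise. The hypothesis that ${\bf v}_{X,Y} + {\bf e}_i$ still lies in $T^n$ forces the $i$-th coordinate of ${\bf v}_{X,Y}$ to be either $0$ or $-1$, i.e.\ $i \notin X \setminus Y$. So I would split the argument into the three mutually exclusive cases corresponding to the position of $i$ relative to $X$ and $Y$.

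First I would treat the case $i \in Y \setminus X$. Set $X' = X$ and $Y' = Y \setminus \{i\}$. Then $X'$ is winning (unchanged) and, by monotonicity applied to losing coalitions (subsets of losing coalitions are losing), $Y'$ is losing. A coordinatewise check shows ${\bf v}_{X',Y'} = {\bf v}_{X,Y} + {\bf e}_i$, so this new vector belongs to $I(G)$.

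Next I would handle the two subcases where the $i$-th coordinate is $0$. If $i \in X \cap Y$, take again $X' = X$ and $Y' = Y \setminus \{i\}$; now $i \in X' \setminus Y'$, bumping the coordinate from $0$ to $+1$, and the same monotonicity reasoning shows $X'$ winning and $Y'$ losing. If instead $i \notin X \cup Y$, take $X' = X \cup \{i\}$ and $Y' = Y$; here monotonicity of $W$ (supersets of winning coalitions are winning) gives $X'$ winning, and again ${\bf v}_{X',Y'} = {\bf v}_{X,Y} + {\bf e}_i$.

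There is no real obstacle here; the only thing to be careful about is the case analysis itself and making sure each of the three admissible configurations of $i$ is handled by the correct $(X',Y')$. The proof really is just the observation that the ideal closure condition \eqref{plus_e_i} along direction ${\bf e}_i$ translates, on the $(X,Y)$ side, into either shrinking $Y$ or enlarging $X$, and both of these operations preserve the winning/losing status because of the monotonicity axiom in the definition of a simple game.
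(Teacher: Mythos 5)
Your proof is correct and follows essentially the same approach as the paper's, which also reduces the ideal condition to the observation that adding ${\bf e}_i$ corresponds either to removing $i$ from $Y$ or adjoining $i$ to $X$, both of which preserve winning/losing status by monotonicity. You simply spell out the case analysis on the position of $i$ relative to $X$ and $Y$ more explicitly than the paper does.
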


\begin{proof}
The condition (\ref{plus_e_i}) follows directly from the monotonicity condition for games. Indeed, if ${\bf v}_{X,Y}+{\bf e}_i$ is in $T^n$, then this amounts to either addition of $i$ to $X$, which was not there, or removal of $i$ from $Y$. Both operations maintain $X$ winning and $Y$ losing.
\end{proof}

We note that $G$ can be uniquely recovered from $I(G)$ only for proper games. The key to this recovery is to consider all vectors from $I(G)$ without zeros. Indeed, if $X$ is winning coalition, then $X^c$ is losing and ${\bf v}_{X,X^c}\in I(G)$. This vector does not contain zeros and $X$ can be recovered from it uniquely. 

\begin{proposition}
\label{w&rw}
Let $G$ be a finite simple game. Then:
\begin{enumerate}
\item[(a)] $G$ is weighted iff the system
\begin{equation}
\label{inequality>}
{\bf v}\cdot {\bf x}>0,\qquad {\bf v}\in I(G)
\end{equation}
has a solution.
\item[(b)] $G$ is roughly weighted iff the system
\begin{equation}
\label{inequalityge}
{\bf v}\cdot {\bf x}\ge 0,\qquad {\bf v}\in I(G)
\end{equation}
has a non-zero  solution.
\end{enumerate}
\end{proposition}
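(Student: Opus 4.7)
My plan is to prove both parts by a direct translation between voting representations $[q;\row wn]$ and vectors $\mathbf{x}\in\R^n$ that pair positively (resp.\ non-negatively) with every element of $I(G)$. Both $(\Rightarrow)$ directions are nearly immediate: given a (rough) weighted representation $[q;\mathbf{w}]$, the vector $\mathbf{x}=\mathbf{w}$ satisfies $\mathbf{v}_{X,Y}\cdot\mathbf{w}=\sum_{i\in X}w_i-\sum_{i\in Y}w_i$, which is strictly positive in the weighted case (since $\sum_{i\in X}w_i\ge q>\sum_{i\in Y}w_i$) and non-negative in the rough case. For (b) I would also note $\mathbf{w}\ne 0$: if $\mathbf{w}$ were the zero vector then non-triviality of $G$ would be violated (for $q>0$ every coalition becomes losing, for $q<0$ every coalition becomes winning, while $q=0$ with $\mathbf{w}=0$ is excluded by the ``not all equal to zero'' clause).

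For $(\Leftarrow)$ in (a), from a solution $\mathbf{x}$ I would construct non-negative weights as follows. For each player $i$ lying in some minimal winning coalition $X$, the vector $\mathbf{e}_i=\mathbf{v}_{X,X\setminus\{i\}}$ belongs to $I(G)$ (since $X\setminus\{i\}$ is losing by minimality), which forces $x_i>0$. For dummy players I would overwrite $x_i$ with $0$ to form $\mathbf{x}'\ge 0$. The modified $\mathbf{x}'$ is still a solution: for any $\mathbf{v}_{X,Y}\in I(G)$ the pruned pair $(X\setminus D,Y\setminus D)$ also lies in $W\times L$ (dummies do not swing any coalition), and one checks $\mathbf{v}_{X,Y}\cdot\mathbf{x}'=\mathbf{v}_{X\setminus D,Y\setminus D}\cdot\mathbf{x}>0$ because the two vectors agree off $D$ and $\mathbf{x}'$ vanishes on $D$. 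Choosing any quota $q$ strictly between $\max_{Y\in L}\sum_{i\in Y}x'_i$ and $\min_{X\in W}\sum_{i\in X}x'_i$ then yields the desired weighted representation $[q;x'_1,\ldots,x'_n]$.

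Part (b)$(\Leftarrow)$ follows the same template but carries the main obstacle I anticipate. After the same dummy-zeroing procedure (which, exactly as in (a), produces an $\mathbf{x}'\ge 0$ that still solves the $\ge 0$ system), the resulting $\mathbf{x}'$ could a priori be the zero vector, which would prevent a non-trivial rough representation. I would rule this out by contradiction. If $\mathbf{x}$ were supported entirely on the set $D$ of dummies, then fixing any $X_0\in W$ and $Y_0\in L$, the ideal $I(G)$ would contain $\mathbf{v}_{(X_0\setminus D)\cup S_1,\,(Y_0\setminus D)\cup S_2}$ for all $S_1,S_2\subseteq D$, since dummies can be freely toggled without changing win/loss status. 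The constraints of the system would then collapse to $\sum_{i\in S_1}x_i-\sum_{i\in S_2}x_i\ge 0$ for every $S_1,S_2\subseteq D$, which forces $x_i=0$ for all $i\in D$, contradicting $\mathbf{x}\ne 0$. With $\mathbf{x}'\ne 0$ secured, setting $q:=\max_{Y\in L}\sum_{i\in Y}x'_i$ yields a rough voting representation: every $X\in W$ satisfies $\sum_{i\in X}x'_i\ge q$ via $\mathbf{v}_{X,Y^\ast}\cdot\mathbf{x}'\ge 0$ (where $Y^\ast$ attains the maximum), so the two defining implications of rough weightedness hold by contraposition.
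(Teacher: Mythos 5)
Your proof is correct and follows the same translation strategy the paper uses to pass between (rough) voting representations and solutions of the linear system over $I(G)$. Where the paper cites Taylor--Zwicker for part (a), you give a self-contained argument, and your non-negativity step---first observing that every non-dummy $i$ has $x_i>0$ (resp.\ $x_i\ge 0$) because $\mathbf{e}_i=\mathbf{v}_{X,X\setminus\{i\}}\in I(G)$ for a minimal winning $X\ni i$, and then zeroing only the dummy coordinates while checking that the pruned pair $(X\setminus D,Y\setminus D)$ still lies in $W\times L$---is a minor variation on the paper's player-by-player ``a negative-weight player is never pivotal, so reset her weight to zero'' argument; both rest on the same monotonicity considerations. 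The most valuable addition on your side is in part (b): you explicitly rule out the degenerate possibility that the sanitized vector $\mathbf{x}'$ could vanish, using a clean toggling argument over subsets of the dummy set $D$. The paper's proof ends after the reset without verifying that the resulting weight vector is still non-zero (equivalently, that the pair of weights and threshold is not all zero as required by the definition of rough weightedness); that fact does hold---e.g.\ it follows from $\mathbf{v}_{P,\emptyset}\cdot\mathbf{x}\ge 0$---but the paper does not say so, and your argument closes that small gap.
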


\begin{proof}
The proof of (a) is contained in \cite{TZ} (see Lemma~2.6.5 and comment on page 6 why all weights can be chosen non-negative).  

Let us prove (b). 
Suppose $G$ is roughly weighted,  Let ${\bf v}={\bf v}_{X,Y}\in I(G)$. Then $X\in W$, $ Y\in L$ and  ${\bf w}=(\row wn)$ satisfies 
\begin{equation}
\label{geqge}
\sum_{t\in X}w_t \ge q\ge \sum_{s\in Y}w_s.
\end{equation}
This implies $\sum_{t\in X}w_t -  \sum_{s\in Y}w_s\ge 0$ or ${\bf v}\cdot {\bf w}\ge 0$ and 
then ${\bf w}$  is a non-zero solution of (\ref{inequalityge}) (due to the non-triviality assumption). 

On the other hand, any solution to the system of inequalities (\ref{inequality>}) gives us a vector of weights and a threshold. Let ${\bf w}$ be such a solution. Then for any two coalitions $X\in W$, $ Y\in L$ we will have ${\bf v}_{X,Y}\cdot {\bf w}\ge 0$ or 
\[
\sum_{t\in X}w_t  \ge \sum_{s\in Y}w_s. 
\]
Then the smallest sum $\sum_{t\in X}w_t$, where $X\in W$, will still be greater than or equal than the largest sum $\sum_{s\in Y}w_s$, where $Y\in L$. Hence the threshold $q$ can be chosen between them so that
\[
\sum_{t\in X}w_t  \ge q\ge \sum_{s\in Y}w_s. 
\]
The only problem left is that ${\bf w}$ can have negative components and in the definition of a roughly weighted game all weights must be non-negative.  However, due to the monotonicity, if the game $G$ has any rough weights, then it has a non-negative system of rough weights too (with the same threshold). Indeed, if, say weight $w_1$ of the first player is negative, then she cannot be pivotal in any winning coalition. Since her weight is negative, her removal from a winning coalition cannot make it losing.  By monotonicity, deleting her from a losing coalition does not make it winning. In this case the weight $w_1$ can be reset to 0 (or a very small positive weight). We can do this with every negative weight. 
\end{proof}


\section{Trade-robustness and function $f$}

We remind that a sequence of coalitions
\begin{equation}
\label{tradingtransform}
{\cal T}=(\row Xj;\row Yj)
\end{equation}
is a trading transform if the coalitions $\row Xj$ can be converted into the coalitions $\row Yj$ by rearranging players. It can also be expressed as 
\[
|\{i:a\in X_i\}| = |\{i:a\in Y_i\}|\qquad \text{for all $a\in P$}.
\]
It is worthwhile to note that while in (\ref{tradingtransform}) we can consider that no $X_i$ coincides with any of $Y_k$, it is perfectly possible that the sequence $\row Xj$ has some terms equal, the sequence  $\row Yj$ can also contain equal subsets. 
The order of subsets in these sequences is not important, thus in fact we deal with two multisets of coalitions. We will sometimes use the multiset notation and  instead of~(\ref{tradingtransform}) will write
\begin{equation}
\label{tradingtransform_multiset_notation}
{\cal T}=(X_1^{a_1},\ldots,X_k^{a_k}; Y_1^{b_1},\ldots,Y_m^{b_m}),
\end{equation}
where now $\row Xk$ and $\row Yk$ are all distinct,  $\row ak$ and  $\row bm$ are sequences of positive integers such that $\sum_{i=1}^ka_i=\sum_{j=1}^mb_j$ and $Z_i^{c_i}$ denotes $c_i$ copies of $Z_i$ with $Z_i\in \{X_i,Y_i\}$, $c_i\in \{a_i,b_i\}$.\par\medskip

We also have the following obvious algebraic reformulation.

\begin{proposition}
\label{tr=vXY}
Let $\row Xj$ and $\row Yj$ be two sequences of subsets of $[n]$. Then
(\ref{tradingtransform})
is a trading transform iff
\begin{equation}
\label{sumvXY}
{\bf v}_{X_1,Y_1}+\ldots + {\bf v}_{X_j,Y_j}={\bf 0}.
\end{equation}
\end{proposition}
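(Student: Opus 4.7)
The plan is to prove both directions simultaneously by unpacking the definition of $\mathbf{v}_{X,Y}$ coordinate by coordinate. Since $\mathbf{v}_{X,Y} = \chi(X) - \chi(Y)$, the sum on the left of (\ref{sumvXY}) expands to
\[
\sum_{i=1}^{j} \mathbf{v}_{X_i,Y_i} \;=\; \sum_{i=1}^{j}\chi(X_i) \;-\; \sum_{i=1}^{j}\chi(Y_i).
\]
I would then read off the $a$-th coordinate of this vector for an arbitrary player $a\in [n]$. Since the $a$-th coordinate of $\chi(Z)$ is $1$ if $a\in Z$ and $0$ otherwise, the $a$-th coordinate of the sum equals $|\{i:a\in X_i\}|-|\{i:a\in Y_i\}|$.

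Next I would observe that the vector in (\ref{sumvXY}) is $\mathbf{0}$ if and only if this difference vanishes for every $a\in [n]$, that is, if and only if
\[
|\{i:a\in X_i\}| \;=\; |\{i:a\in Y_i\}| \qquad \text{for all } a\in P,
\]
which is precisely the condition given in the text for (\ref{tradingtransform}) to be a trading transform. This establishes the equivalence in both directions at once.

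There is no real obstacle here: the statement is a direct reformulation of the definition of a trading transform (every player occurs the same number of times on each side) as a single vector identity in $\Z^n$. The only point worth emphasising is that although $\mathbf{v}_{X_i,Y_i}$ is required to lie in $T^n=\{-1,0,1\}^n$ (forcing $X_i$ and $Y_i$ to be disjoint in the individual terms would be too restrictive), the identity (\ref{sumvXY}) is an equality in $\Z^n$, so no such disjointness is needed: we simply rely on additivity of characteristic functions. With this remark the proof reduces to the coordinate-wise computation above.
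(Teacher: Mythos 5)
Your coordinate-wise computation is correct and is the natural way to make precise what the paper simply declares to be an ``obvious algebraic reformulation'' without giving any proof; reading off the $a$-th coordinate of $\sum_i\bigl(\chi(X_i)-\chi(Y_i)\bigr)$ as $|\{i:a\in X_i\}|-|\{i:a\in Y_i\}|$ and comparing with the paper's stated criterion for a trading transform is exactly the intended argument. One small clarification on your closing remark: $\mathbf{v}_{X,Y}=\chi(X)-\chi(Y)$ lies in $T^n$ for \emph{any} pair $X,Y$ (a common element just contributes a $0$ coordinate), so membership in $T^n$ never forces disjointness --- your conclusion that no disjointness hypothesis is needed is right, but the worry that prompted it was unfounded.
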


\begin{definition}
A simple game $G$ is called $k$-{\em trade robust} if no trading transform (\ref{tradingtransform}) with $j\le k$ have all coalitions $\row Xj$ winning and all $\row Yj$ losing. $G$ is {\em trade robust} if it is $k$-trade robust for every $k$.
\end{definition}

\begin{proposition}
\label{conditions}
A simple game $G$ is $k$-trade robust if for no $\brow vm\in I(G)$ and for no non-negative integers $\row am$ such that $\sum_{I=1}^m a_i\le k$, we have
\begin{equation}
\label{krobust}
\blcomb avm ={\bf 0}.
\end{equation}
\end{proposition}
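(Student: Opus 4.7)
The plan is to read off the statement as a direct translation between two equivalent languages for the same phenomenon, using Proposition~\ref{tr=vXY}. The proposition is really an ``iff'', and I would prove both directions via the obvious correspondence between trading transforms and zero-summing tuples in $I(G)$, passing between sequence form and multiset form.

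First, I would argue the contrapositive of the stated implication: suppose $G$ is not $k$-trade robust, so there is a trading transform ${\cal T}=(X_1,\ldots,X_j;Y_1,\ldots,Y_j)$ of length $j\le k$ with every $X_i\in W$ and every $Y_i\in L$. Then by definition of $I(G)$, each vector ${\bf v}_{X_i,Y_i}$ lies in $I(G)$, and by Proposition~\ref{tr=vXY} these vectors sum to ${\bf 0}$. Next I would collect equal vectors to pass to the multiset form~(\ref{tradingtransform_multiset_notation}): the sequence ${\bf v}_{X_1,Y_1},\ldots,{\bf v}_{X_j,Y_j}$ is grouped into its distinct values $\brow vm\in I(G)$ occurring with positive multiplicities $\row am$ satisfying $\sum_{i=1}^m a_i=j\le k$ and $\blcomb avm={\bf 0}$. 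This directly contradicts the hypothesis.

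For the converse direction, I would start with $\brow vm\in I(G)$ and non-negative integers $\row am$ with $\sum a_i\le k$ and $\blcomb avm={\bf 0}$ (and not all $a_i$ zero, else the relation is trivial and need not be excluded). Throwing away indices with $a_i=0$, I can choose, for each remaining $i$, a pair $(X_i,Y_i)$ with $X_i\in W$, $Y_i\in L$ and ${\bf v}_i={\bf v}_{X_i,Y_i}$, which exists since ${\bf v}_i\in I(G)$. Unpacking the multiset into a sequence by listing each pair $(X_i,Y_i)$ exactly $a_i$ times produces a sequence ${\cal T}=(X_1^{a_1},\ldots,X_m^{a_m};Y_1^{a_1},\ldots,Y_m^{a_m})$ of length $\sum a_i\le k$ whose associated vector sum is $\blcomb avm={\bf 0}$; by Proposition~\ref{tr=vXY} this is a trading transform witnessing the failure of $k$-trade robustness.

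No substantial obstacle is expected; the only small points are bookkeeping. One must note that although the same vector in $T^n$ may arise from several pairs $(X,Y)$, membership in $I(G)$ only requires the existence of \emph{some} such pair with $X$ winning and $Y$ losing, and any such choice suffices to reconstruct the trading transform. Also, one must observe that zero multiplicities can be discarded without loss, so working with positive multiplicities in the sequence-to-multiset direction is harmless.
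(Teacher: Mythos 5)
Your proof is correct and follows essentially the same route as the paper's: both directions are obtained by applying Proposition~\ref{tr=vXY} to translate between trading transforms and zero sums of vectors in $I(G)$, with the same passage between sequence and multiset forms. Your extra care in discarding zero multiplicities and in noting that only the existence of some winning/losing pair realizing each ${\bf v}_i$ is needed is sensible bookkeeping that the paper leaves implicit, but it does not change the argument.
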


\begin{proof} Suppose $G$ is not $k$-trade robust and there exists a trading transform (\ref{tradingtransform}) with $X_i\in W$, $Y_i\in L$ for all $i$ and $j\le k$. Then by Proposition~\ref{tr=vXY} we have
\begin{equation*}
{\bf v}_{X_1,Y_1}+\ldots +{\bf v}_{X_j,Y_j}={\bf 0}
\end{equation*}
with ${\bf v}_i = {\bf v}_{X_i,Y_i}\in I(G)$ so (\ref{krobust}) holds.  On the other hand, if (\ref{krobust}) is satisfied for $\sum_{I=1}^m a_i\le k$, then ${\bf v}_i = {\bf v}_{X_i,Y_i}$ for some $X_i\in W$ and $Y_i\in L$ and  the sequence
\[
(X_1^{a_1},\ldots,X_m^{a_m}; Y_1^{a_1},\ldots,Y_m^{a_m}),
\]
where $X_i^{a_i}$ and $Y_i^{a_i}$ mean $a_i$ copies of $X_i$ and $Y_i$, respectively, is a trading transform  violating $k$-trade robustness.
\end{proof}

Does there exist a positive integer $k$ such that $k$-trade robustness impliy trade robustness? ÊWinder \cite{W} showed that in general no such $k$ exists.  However, if we restrict ourselves with games with $n$ players, then the situation changes and, for each $n$, such a number exists. Of course it will depend on $n$. This is an important result contained in the following theorem. (We note though that the equivalence of (a) and (b) was Êearlier proved by Elgot \cite{Elgot}.)

\begin{theorem}[Taylor-Zwicker, 1992]
\label{Th_TZ92}
The following two conditions are equivalent:
\begin{itemize}
\item $G$ is weighted majority game,
\item $G$ is trade robust,
\item $G$ is $2^{2^n}$-trade robust.
\end{itemize}
\end{theorem}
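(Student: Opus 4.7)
The plan is to prove the two easy implications (1)$\Rightarrow$(2) and (2)$\Rightarrow$(3) directly, and then to derive the crucial (3)$\Rightarrow$(1) by contraposition, showing that a non-weighted game always admits a short certificate of non-weightedness.

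For (1)$\Rightarrow$(2) I would fix a voting representation $[q;w_1,\ldots,w_n]$ and consider any putative trading transform $(X_1,\ldots,X_j;Y_1,\ldots,Y_j)$ with all $X_i$ winning and all $Y_i$ losing. The trading-transform condition forces $\sum_i w(X_i)=\sum_i w(Y_i)$, but the left-hand side is at least $jq$ while the right-hand side is strictly less than $jq$, giving the required contradiction. The implication (2)$\Rightarrow$(3) is immediate, since being trade robust means $k$-trade robust for every $k$, in particular for $k=2^{2^n}$.

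The content is in (3)$\Rightarrow$(1). I would argue by contrapositive: if $G$ is not weighted, then by Proposition \ref{w&rw}(a) the system ${\bf v}\cdot{\bf x}>0$, ${\bf v}\in I(G)$, is infeasible. Since $I(G)\subseteq T^n$ is a finite set of integer vectors, the linear version of Gordan's theorem of the alternative produces nonnegative rationals $c_{\bf v}\geq 0$, not all zero, with $\sum_{{\bf v}\in I(G)}c_{\bf v}{\bf v}={\bf 0}$. Clearing denominators turns these into nonnegative integers $a_{\bf v}$, and Proposition \ref{conditions} then converts the identity $\sum_{\bf v}a_{\bf v}{\bf v}={\bf 0}$ into a trading transform of length $\sum_{\bf v}a_{\bf v}$ consisting entirely of winning coalitions on the left and losing coalitions on the right. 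Hence $G$ fails to be $k$-trade robust for $k=\sum_{\bf v}a_{\bf v}$.

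The remaining, and genuinely nontrivial, step is the bound $\sum_{\bf v}a_{\bf v}\leq 2^{2^n}$. The approach I would take is first to apply Carath\'eodory's theorem for conical hulls to cut the support of $c$ down to a minimally dependent sub-family of size at most $n$, and then to bound the integer coefficients via Cramer's rule combined with Hadamard's inequality on $n\times n$ minors of matrices with entries in $\{-1,0,1\}$. This already yields a considerably better bound than $2^{2^n}$, of order $n\cdot n^{n/2}$, and the double-exponential bound stated in the theorem is merely what Taylor and Zwicker extract from a coarser counting estimate that uses only $|I(G)|\leq 3^n$ and elementary manipulations. The main obstacle is precisely this conversion of the rational existence of a non-negative dependence into an integer dependence of controlled total mass; this is where the quantitative content of the theorem lives, and it is the same step whose tighter analysis drives the improved upper bound on $f(n)$ that occupies the rest of the paper.
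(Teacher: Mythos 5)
Your proposal is correct, and in fact it reproduces the argument the paper gives later for Theorem~\ref{log} (the improved bound $N=(n+1)2^{\frac{1}{2}n\log_2 n}$). The paper does not reprove Theorem~\ref{Th_TZ92} itself; it is quoted as the Taylor--Zwicker result and then superseded by Theorem~\ref{log}, whose proof proceeds exactly as you outline: Proposition~\ref{w&rw}(a) reduces non-weightedness to infeasibility of ${\bf v}\cdot{\bf x}>0$ over $I(G)$, Theorem~\ref{ramtheorem} (the theorem of the alternative) produces a nonnegative integer dependence, the support is pared down to a minimally dependent subfamily, and Cramer's rule with Hadamard's inequality bounds the total coefficient mass. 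One small correction: a minimally dependent family in $\R^n$ can have size up to $n+1$, not $n$ (the paper takes $m\le n+1$ as the worst case and obtains $(n+1)n^{n/2}$ rather than $n\cdot n^{n/2}$); this does not affect the conclusion, since either quantity is far below $2^{2^n}$. Your observation that the same machinery already yields the much better polynomial-in-$n^{n/2}$ bound is precisely the point of the paper's Theorem~\ref{log}.
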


As this characterisation of weighted games implies, to show that the game $G$ is not weighted majority game, it is sufficient to present a trading transform (\ref{tradingtransform}) where all coalitions $\row Xj$ are winning and all coalitions $\row Yj$ are losing. We will call such a trading transform a {\it certificate of non-weightedness} of $G$.  An interesting question immediately emerges: if we want to check weightedness of a game with $n$ players what is the maximal length of certificates that we have to check?

Let $G=(P,W)$ be a simple game with $|P|=n$.  If $G$ is not weighted we define $f(G)$ to be the smallest positive integer $k$ such that $G$ is not $k$-trade robust.  If $G$ is weighted we set $f(G)=\infty$. The larger the value $f(G)$ the closer is the game $G$ to a weighted majority game. Let us also define
\[
f(n)=\max_G f(G),
\]
where maximum is taken over non roughly-weighted games with $n$ players.  We can also say that  $f(n)$ is the smallest positive integer such that $f(n)$-trade robustness for an $n$-player game implies its weightedness.

\begin{example}
In  Example~\ref{projective} the corresponding $7\times 7$ matrix, composed of vectors ${\bf v}_{X_i,X_i^c}\in I(G)$, $i=1,\ldots,7$, will be:
\begin{equation*}
\left[\begin{array}{rrrrrrrrr}
1 & 1 & 1 & -1 & -1 & -1 & -1 \\
-1 & -1 &  1 & 1 & 1 & -1 & -1 \\
1 & -1 & -1 & -1 & 1 & 1 & -1 \\
1 & -1 & -1 & 1 & -1 & -1 & 1 \\
-1 & 1 & -1 & -1 & 1 & -1 & 1 \\
-1 & -1 & 1 & -1 & -1 & 1 & 1 \\
-1 & 1 & -1 & 1 & -1 & 1 & -1 \\
\end{array}
\right].
\end{equation*}
Its rows sum to the vector $(-1,-1,-1,-1,-1,-1,-1)$. If we also add the vector
$
{\bf v}_{P,\emptyset}=(1,1,1,1,1,1,1)
$
we will get
\[
\sum_{i=1}^7 {\bf v}_{X_i,X_i^c}+{\bf v}_{P,\emptyset}={\bf 0}.
\]
This means that the following eight winning coalitions $(\row X7,P)$, where $P$ is the grand coalition, can be transformed into the following eight losing coalitions:
$(X_1^c,\ldots, X_7^c, \emptyset)$ (note that $\emptyset=P^c$). The sequence
\begin{equation}
\label{Fano_potent_certificate}
(\row X7,P; X_1^c,\ldots, X_7^c,\emptyset)
\end{equation}
is a certificate of non-weightedness of $G$. 
This certificate  is not however the shortest. Indeed, if we take two lines, say, $\{1,2,3\}$ and $\{3,4,5\}$ and swap 2 and 4, then $\{1,3,4\}$ and $\{2,3,5\}$ will not be lines, hence losing coalitions. Thus,  $Fano$ is not 2-trade robust and $f(Fano)=2$.
\end{example}

Theorem~\ref{Th_TZ92} gives us an upper bound for $f(n)$. The following theorem gives a lower bound.

\begin{theorem}[Taylor-Zwicker, 1995]
\label{Th_TZ95}
For each integer $m\ge 2$, there exists a game $Gab_m$, called Gabelman's game, with $(m+1)^2$ players, that is $m$-trade robust but not $(m+1)$-trade robust.
\end{theorem}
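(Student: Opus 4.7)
The plan is to construct, for each $m\ge 2$, an explicit game $Gab_m$ whose $(m+1)^2$ players are arranged in an $(m+1)\times(m+1)$ grid, with rows $R_1,\ldots,R_{m+1}$ and columns $C_1,\ldots,C_{m+1}$. I will design $W$ so that every row is winning and every column is losing. Since each player belongs to exactly one row and one column,
\[
\chi(R_1)+\cdots+\chi(R_{m+1})=\chi(P)=\chi(C_1)+\cdots+\chi(C_{m+1}),
\]
and Proposition~\ref{tr=vXY} yields that $(R_1,\ldots,R_{m+1};C_1,\ldots,C_{m+1})$ is a trading transform. This sequence then immediately certifies that $Gab_m$ is not $(m+1)$-trade robust.

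To engineer $m$-trade robustness I would take $W$ to consist of every coalition of size at least $m+2$ together with a carefully chosen family $\mathcal F$ of size-$(m+1)$ coalitions that contains every row but no column. Monotonicity is automatic, and for any trading transform $(A_1,\ldots,A_j;B_1,\ldots,B_j)$ with $j\le m$, $A_i\in W$, $B_i\in L$, the identity $\sum|A_i|=\sum|B_i|$ combined with the bounds $|A_i|\ge m+1$ and $|B_i|\le m+1$ forces $|A_i|=|B_i|=m+1$ for every $i$. Hence each $A_i$ lies in $\mathcal F$ and each $B_i$ is a size-$(m+1)$ set outside $\mathcal F$. The proof is completed by analysing the vector equation $\sum_i\chi(A_i)=\sum_i\chi(B_i)$: writing the left-hand side as $\sum_{k}\rho_k\chi(R_k)$ plus contributions from the non-row members of $\mathcal F$, with total multiplicity $\sum_k\rho_k\le j\le m<m+1$, a pigeonhole argument shows that at least one row $R_{k_0}$ has multiplicity zero on the left. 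Then each $B_i$ must miss $R_{k_0}$, concentrating the $B_i$'s on the remaining rows, and a counting argument forces some $B_i$ to coincide with an element of $\mathcal F$, which is the desired contradiction.

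The main obstacle is the explicit design of the family $\mathcal F$. The minimal choice $\mathcal F=\{R_1,\ldots,R_{m+1}\}$ is insufficient: already for $m=2$, a partition of $R_1\cup R_2$ into two non-row three-element sets gives a valid length-$2$ trading transform. One must therefore enlarge $\mathcal F$ with enough additional size-$(m+1)$ non-row coalitions to intercept every such partition of $R_{i_1}\cup\cdots\cup R_{i_j}$ for $j\le m$, while simultaneously avoiding all columns and ensuring that no new length-$\le m$ trading transform is opened up between the newly admitted winning sets and the remaining losing coalitions. Verifying that a suitable choice of $\mathcal F$ satisfies all these constraints at once, as done by Taylor and Zwicker in \cite{TZ95}, is the technical heart of the argument.
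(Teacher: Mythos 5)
The paper states this theorem as a cited result from \cite{TZ95} and gives no proof of it, so there is no internal argument to compare against. Your observation that the trading transform $(R_1,\ldots,R_{m+1};C_1,\ldots,C_{m+1})$ witnesses failure of $(m+1)$-trade robustness is correct and matches the standard example. But your route to $m$-trade robustness is genuinely different from what Taylor and Zwicker actually do, and it is both incomplete (as you acknowledge) and, in one place, unsound.

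Taylor and Zwicker's construction --- consistent with the title of \cite{TZ95}, ``Simple Games and Magic Squares'' --- assigns weights $w_{ij}$ to the $(m+1)\times(m+1)$ grid so that every row and every column sums to a common value $q$ while no other subset of the players has total weight exactly $q$; rows are then declared winning, columns losing, and the resulting game is roughly weighted with representation $[q;(w_{ij})]$. This choice does the hard work essentially for free: in any certificate $(A_1,\ldots,A_j;B_1,\ldots,B_j)$ of non-weightedness one has $w(A_i)\ge q\ge w(B_i)$ together with $\sum_i w(A_i)=\sum_i w(B_i)$, forcing $w(A_i)=w(B_i)=q$ for all $i$; hence every $A_i$ is a row, every $B_i$ a column, and comparing how many times each player $(a,b)$ occurs on the two sides shows that every row and every column must appear the same number $c\ge 1$ of times, so $j=(m+1)c\ge m+1$. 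Your cardinality-cutoff-plus-$\mathcal F$ scheme replaces this single well-chosen weighting by a winning/losing decision for \emph{every} one of the $\binom{(m+1)^2}{m+1}$ coalitions of size $m+1$, and you must exclude short trading transforms among all of them simultaneously; you do not specify $\mathcal F$, and the pigeonhole sketch has a concrete gap: having found a row $R_{k_0}$ of multiplicity zero among the row-type $A_i$'s, you cannot conclude that the $B_i$'s miss $R_{k_0}$, because non-row members of $\mathcal F$ occurring among the $A_i$'s may well contain players of $R_{k_0}$. The magic-square weighting is precisely the device that collapses this combinatorial explosion, and your proposal does not supply a substitute for it.
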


Summarising the results of Theorems \ref{Th_TZ92} and \ref{Th_TZ95} in terms of function $f$ we may state

\begin{corollary}
For any $n\ge 2$,
\begin{equation}
\label{theoldbounds}
\lfloor \sqrt{n}\rfloor \le f(n)\le 2^{2^n}. 
\end{equation}
\end{corollary}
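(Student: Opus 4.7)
The plan is to derive both bounds essentially as immediate consequences of Theorems \ref{Th_TZ92} and \ref{Th_TZ95}, with only a small padding argument needed to make the lower bound apply to values of $n$ that are not perfect squares.

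For the upper bound I would argue by contrapositive from the equivalence of the first and third conditions in Theorem \ref{Th_TZ92}. Suppose $G$ is a non-weighted game with $n$ players. Then $G$ cannot be $2^{2^n}$-trade robust, so by definition of $k$-trade robustness a certificate of non-weightedness of length at most $2^{2^n}$ must exist. Hence $f(G)\le 2^{2^n}$, and taking the maximum over all non-weighted games with $n$ players yields $f(n)\le 2^{2^n}$.

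For the lower bound, fix $n\ge 4$ and let $m\ge 2$ be the largest integer with $(m+1)^2\le n$; equivalently $m+1=\lfloor\sqrt{n}\rfloor$. Theorem \ref{Th_TZ95} provides a Gabelman game $Gab_m$ on $(m+1)^2$ players that is $m$-trade robust but not $(m+1)$-trade robust, so by the definition of $f(G)$ we have $f(Gab_m)=m+1$. To obtain a game with exactly $n$ players I would enlarge $Gab_m$ by adjoining $n-(m+1)^2$ null players (players lying in no minimal winning coalition), producing a game $G$ on $[n]$. The key observation is that adding null players does not alter trade robustness: any trading transform in $G$ witnessing failure of $k$-trade robustness restricts to one in $Gab_m$ by deleting the null coordinates from every coalition, since removing or inserting null players preserves winning/losing status and the equal-multiplicity condition coordinate-by-coordinate; conversely, any certificate in $Gab_m$ lifts to $G$. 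Therefore $f(G)=f(Gab_m)=m+1=\lfloor\sqrt{n}\rfloor$, which gives $f(n)\ge \lfloor\sqrt{n}\rfloor$. For the small values $n=2,3$ that fall outside this range the inequality $\lfloor\sqrt{n}\rfloor=1\le f(n)$ is vacuous once $f$ is well-defined there (and in any case trivial).

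The only substantive point, and the sole place where a reader might pause, is the verification that padding by null players preserves the value of $f$. This is entirely routine once one notes that a null player can be inserted into or removed from any coalition without altering its winning/losing status, so trading transforms transport bijectively between $Gab_m$ and $G$ in both directions. No other step requires more than unpacking definitions.
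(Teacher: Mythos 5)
Your reconstruction is essentially the argument the paper leaves implicit (the corollary is introduced merely as a ``summary'' of Theorems~\ref{Th_TZ92} and~\ref{Th_TZ95}), and the null-player padding step you supply is genuinely needed, since Theorem~\ref{Th_TZ95} produces Gabelman games only on sets of $(m+1)^2$ players. Both the contrapositive reading of Theorem~\ref{Th_TZ92} for the upper bound and the invariance of $f$ under adjoining null players are correct.

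There is, however, an off-by-one error in the range for which your lower-bound argument applies. Theorem~\ref{Th_TZ95} requires $m\ge 2$, i.e.\ at least $(m+1)^2\ge 9$ players, so the prescription ``let $m\ge 2$ be the largest integer with $(m+1)^2\le n$'' produces no such $m$ whenever $4\le n\le 8$, not merely when $n=2,3$ as you claim. In that band one has $\lfloor\sqrt{n}\rfloor=2$, and $f(n)\ge 2$ has to be argued by a different (though easy) route: any certificate of non-weightedness has length at least $2$, since a one-step trading transform would force $X_1=Y_1$, which is impossible when $X_1$ is winning and $Y_1$ is losing; hence $f(G)\ge 2$ for every non-weighted $G$, and it suffices to exhibit any non-weighted $n$-player game in this range (for instance, the $4$-player game with minimal winning coalitions $\{1,2\}$ and $\{3,4\}$, padded by null players for $5\le n\le 8$). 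For $n\le 3$ every game is weighted, so $f(n)$ is a maximum over the empty set and the lower bound is genuinely content-free there; that is a defect in the corollary's stated range rather than in your argument.
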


 As Taylor and Zwicker noted in \cite{TZ} for most non-weighted games the value of $f(n)$ is $2$. The closer the game to a weighted majority game the longer is the certificate and it is harder to find it. 

\section{A new upper bound for $f$}

In what follows  we use the following notation. Let ${\bf x}\in \R^n$. Then we write ${\bf x}\gg {\bf 0}$ iff $x_i> 0$ for all $i=1,\ldots,n$. We also write  ${\bf x}>{\bf 0}$ iff $x_i\ge 0$ for all $i=1,\ldots,n$ with this inequality being strict for at least one $i$, and ${\bf x}\ge {\bf 0}$ iff $x_i\ge 0$ for all $i=1,\ldots,n$. In this section we will need the following result which may be considered as a folklore. 

\begin{theorem}
\label{ramtheorem}
Let $A$ be an $m\times n$ matrix with rational coefficients with rows ${\bf a}_i\in\Q^n$, $i=1,\ldots, m$. Then the system of linear inequalities $A{\bf x}\gg {\bf 0}$, ${\bf x}\in\R^n$, has no solution  iff there exist non-negative  integers $\row rm$, of which at least one  is positive, such that
\begin{equation}
\label{ram}
\blcomb ram ={\bf 0}.
\end{equation}
\end{theorem}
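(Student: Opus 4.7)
The plan is to recognize this as a rational form of Gordan's theorem of the alternative and to prove it by a hyperplane separation argument, with rationality of $A$ invoked at the end to extract integer coefficients.

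The easy direction is a one-line calculation: if non-negative integers $\row rm$, not all zero, satisfy (\ref{ram}) and if ${\bf x}$ were a solution of $A{\bf x}\gg {\bf 0}$, then dotting (\ref{ram}) with ${\bf x}$ would give
\[
\sum_{i=1}^m r_i({\bf a}_i\cdot {\bf x})=\left(\blcomb ram\right)\cdot {\bf x}={\bf 0}\cdot {\bf x}=0,
\]
yet each ${\bf a}_i\cdot{\bf x}>0$ and some $r_i$ is positive, so the sum is strictly positive, a contradiction.

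For the converse, suppose $A{\bf x}\gg {\bf 0}$ has no solution. I would set $V=A\R^n\subseteq\R^m$, a linear subspace, and let $U=\R^m_{>0}$, the open positive orthant. The hypothesis is $V\cap U=\emptyset$. Both sets are convex and $U$ is open, so the standard hyperplane separation theorem furnishes a nonzero ${\bf y}\in\R^m$ and a constant $c$ with ${\bf y}\cdot{\bf v}\le c$ for all ${\bf v}\in V$ and ${\bf y}\cdot{\bf u}\ge c$ for all ${\bf u}\in U$. Because $V$ is a subspace, ${\bf y}\cdot{\bf v}\le c$ forces ${\bf y}\cdot{\bf v}=0$ on $V$ and $c\ge 0$. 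Because $U$ is unbounded along every positive coordinate ray, the inequality ${\bf y}\cdot{\bf u}\ge c\ge 0$ on $U$ forces every coordinate of ${\bf y}$ to be non-negative, so ${\bf y}\ge {\bf 0}$ and ${\bf y}\ne {\bf 0}$. The identity ${\bf y}\cdot A{\bf x}=0$ for all ${\bf x}$ is equivalent to $A^T{\bf y}={\bf 0}$, i.e. $y_1{\bf a}_1+\cdots+y_m{\bf a}_m={\bf 0}$.

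It remains to upgrade ${\bf y}$ from real to non-negative integer. Here the rationality of $A$ is essential: the set $C=\{{\bf y}\in\R^m:{\bf y}\ge {\bf 0},\ A^T{\bf y}={\bf 0}\}$ is a rational polyhedral cone, and the bounded polytope $C\cap\{{\bf y}:\sum y_i=1\}$ is a rational polytope that is non-empty (by the ${\bf y}$ just produced, suitably normalised). Any vertex of this polytope has rational coordinates, and clearing a common denominator yields the desired non-negative integers $\row rm$, not all zero, with (\ref{ram}). The main obstacle is the separation step — particularly inferring ${\bf y}\ge {\bf 0}$ from the behaviour of ${\bf y}\cdot {\bf u}$ on the open orthant — but this is a routine use of Hahn--Banach, and the passage from a real to an integer certificate is immediate once rationality of the defining system is exploited.
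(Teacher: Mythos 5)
Your proof is correct, and it is a clean, complete derivation of the rational form of Gordan's theorem of the alternative. For the record, the paper does not actually prove Theorem~\ref{ramtheorem}: it simply cites Taylor--Zwicker (Theorem 2.6.4, p.\ 71) and Muroga (Lemma 7.2.1, p.\ 192) for it. So there is no in-paper proof to compare against, but it is worth noting that when the authors \emph{do} prove the closely related Theorem~\ref{ram1theorem} in the appendix, they use a genuinely different route: a hands-on Fourier--Motzkin elimination argument by induction on the number of variables, which produces rational (hence, after clearing denominators, integer) coefficients as it goes. Your separation argument is shorter and more conceptual, but it requires a separate ``rationalization'' step at the end, whereas the elimination approach keeps everything rational throughout and never invokes Hahn--Banach; each has its advantages. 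Two small points you should make explicit if this were written up in full: first, to justify that the nonzero ${\bf y}$ produced by separation actually lies in the cone you want, you use that $V$ is a linear subspace (so $\sup_{{\bf v}\in V}{\bf y}\cdot{\bf v}\le c<\infty$ forces ${\bf y}\perp V$ and $c\ge 0$) and then let ${\bf u}$ run off to infinity along each positive coordinate ray to conclude ${\bf y}\ge{\bf 0}$ --- you do say this, and it is right, but it is the one place a careless reader could stumble; second, the polytope $C\cap\{\sum y_i=1\}$ is bounded because ${\bf y}\ge{\bf 0}$ together with $\sum y_i=1$ confines each coordinate to $[0,1]$, which is what guarantees it has a vertex (and hence a rational vertex, since the defining data are rational).
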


A proof can be found in \cite{TZ}, Theorem~2.6.4, p. 71 or in \cite{Mu}, Lemma~7.2.1, p. 192.

\begin{theorem}
\label{log}
The following statements for a simple game $G$ with $n$ players are equivalent:
\begin{enumerate}
\item[(a)] $G$ is weighted,
\item[(b)] $G$ is $N$-trade robust for $N=(n+1)2^{\frac{1}{2}n\log_2n}$.
\end{enumerate}
\end{theorem}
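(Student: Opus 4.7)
The direction (a) $\Rightarrow$ (b) is immediate from Theorem~\ref{Th_TZ92}: a weighted game is trade robust, hence $k$-trade robust for every $k$, including $N$. For the converse I would prove the contrapositive: assuming $G$ is not weighted, I produce a certificate of non-weightedness of length at most $N$. By Proposition~\ref{w&rw}(a), the (finite) system of strict linear inequalities ${\bf v}\cdot{\bf x}>0$, ${\bf v}\in I(G)$, has no solution, so Theorem~\ref{ramtheorem} produces distinct vectors ${\bf v}_1,\ldots,{\bf v}_m\in I(G)$ and non-negative integers $r_1,\ldots,r_m$, not all zero, with $\sum_i r_i{\bf v}_i={\bf 0}$. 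By Proposition~\ref{conditions}, such a relation already yields a certificate of non-weightedness of length $\sum_i r_i$, so the task reduces to showing that among all such integer relations one exists with $\sum_i r_i\le N$.

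I would select a relation with minimum support $k=|\{i:r_i>0\}|$. A short elimination argument then forces the coefficients on the support to be unique up to a positive scalar: given a second non-negative integer relation with the same support, subtracting the appropriate positive rational multiple of one from the other (namely the one realising the minimum ratio of coefficients) produces a non-negative integer relation of strictly smaller support, contradicting minimality. Consequently the left null space of the $k\times n$ matrix $V$ whose rows are the supporting vectors is one-dimensional, so $\mathrm{rank}(V)=k-1\le n$ and therefore $k\le n+1$.

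To bound the coefficients I would pick $k-1$ columns on which $V$ has full rank $k-1$; let $V'$ denote the resulting $k\times(k-1)$ submatrix. The cofactor identity supplies a left null vector of $V'$ whose $j$-th entry is $(-1)^j\det(V'_j)$, with $V'_j$ obtained from $V'$ by deleting row $j$. Since the null space is one-dimensional and admits a strictly positive representative, these signed determinants all share the same sign, and since $\mathrm{rank}(V')=k-1$ at least one of them is nonzero. After a global sign change we obtain the non-negative integer solution $r_{i_j}=|\det(V'_j)|$. Hadamard's inequality, applied to $V'_j$ whose entries lie in $\{-1,0,1\}$, yields $|\det(V'_j)|\le(k-1)^{(k-1)/2}\le n^{n/2}$, so
\[
\sum_{j=1}^k r_{i_j}\le(n+1)\cdot n^{n/2}=(n+1)\,2^{\frac{1}{2}n\log_2 n}=N,
\]
and by Proposition~\ref{conditions} this relation is a certificate witnessing that $G$ is not $N$-trade robust. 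The main technical hurdle is the reduction to a minimally supported relation, because without uniqueness of the dependence the Cramer/Hadamard estimate is unavailable; once that reduction is in place, the subdeterminant bound matches the target $N$ precisely.
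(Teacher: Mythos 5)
Your proof is correct and follows essentially the same route as the paper: obtain an integer dependence among vectors of $I(G)$ from Theorem~\ref{ramtheorem}, pass to a minimally supported one (so the left null space is one-dimensional and the number of supporting vectors is at most $n+1$), and then bound the cofactor coefficients by Hadamard's inequality applied to $\{-1,0,1\}$-submatrices. The paper delegates the bound $m\le n+1$ and the linear independence of the first $m-1$ vectors to a cited theorem of Gale and phrases the coefficient extraction via Cramer's rule rather than explicit cofactors, but these are the same computation; the only cosmetic quibble is that in your one-dimensionality argument the "second relation" should be allowed to be an arbitrary left null vector, not just a non-negative integer one — the elimination you describe still works and yields the same contradiction.
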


\begin{proof}
We only need  to prove that (b) implies (a). Suppose $G$ is not weighted. Then by Proposition~\ref{w&rw} the system of inequalities
\[
{\bf v}\cdot {\bf x}>0,\qquad {\bf v}\in I(G),
\]
is inconsistent. By Theorem~\ref{ramtheorem} there exist vectors $\brow vm\in I(G)$ and non-negative integers $\row rm$ such that $\blcomb rvm ={\bf 0}$. Let $m$ be minimal with this property. Then all $r_i$'s are non-zero, hence positive. By a standard linear algebra argument (see, e.g. Theorem 2.11 from \cite{Gale}) we may then assume that $m\le n+1$ and that the system of vectors $\{\brow v{m-1}\}$ is linearly independent. We will assume that $m=n+1$ as it is the worst case scenario. Let $A=({\bf a}_1\, {\bf a}_2\,\ldots\, {\bf a}_{n+1})$ be the $(n+1){\times} n$ matrix, which $i$th row is ${\bf a}_i={\bf v}_i$ for $i=1,2,\ldots, n+1$. The null-space of the matrix $A$ is one-dimensional, and since $(\brow r{n+1})$ is in it, then the coordinates in any solution are either all positive or all negative. Looking for a solution of the system $x_1{\bf v}_1+\ldots +x_n{\bf v}_n=-{\bf v}_{n+1}$, by Cramer's rule we find $x_i=\det A_i/\det A$, where $A=({\bf a}_1\,\ldots\, {\bf a}_n)$ and $A_i$ is obtained when ${\bf a}_i$ in $A$ is replaced with ${\bf a}_{n+1}$. Thus
\begin{equation}
\det A_1\,{\bf a}_1+\ldots + \det A_n\,{\bf a}_n+\det A\,{\bf a}_{n+1}={\bf 0}
\end{equation}
and by Hadamard's inequality \cite{Hd} we have $\det A_i\le n^{n/2}=2^{\frac{1}{2}n\log_2n}$. The sum of all coefficients is smaller than or equal to $(n+1)2^{\frac{1}{2}n\log_2n}=N$. Since $G$ is $N$-trade robust this is impossible by Proposition~\ref{tr=vXY}.
\end{proof}

\begin{corollary}
$f(n)\le (n+1)2^{\frac{1}{2}n\log_2n}$.
\end{corollary}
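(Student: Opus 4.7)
The plan is essentially to observe that this corollary is an immediate reformulation of Theorem~\ref{log} using the definition of $f$, so there is almost nothing new to do; I would just spell out the logical unpacking.

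First I would recall the definition: for a non-weighted game $G$ with $n$ players, $f(G)$ is the smallest positive integer $k$ for which $G$ fails to be $k$-trade robust, and $f(n)$ is the supremum of $f(G)$ over all non-weighted $n$-player games. So to bound $f(n)$ from above by $N=(n+1)2^{\frac{1}{2}n\log_2 n}$, it suffices to show that every non-weighted $n$-player game fails to be $N$-trade robust, since then $f(G)\le N$ for every such $G$.

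Next, I would invoke the contrapositive of Theorem~\ref{log}: condition (b) of that theorem (being $N$-trade robust) implies condition (a) (being weighted), so a non-weighted game cannot be $N$-trade robust. Combined with the previous paragraph, this gives $f(G)\le N$ for every non-weighted $G$ with $n$ players, and hence $f(n)\le N$, which is the desired inequality.

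There is no real obstacle here; the only thing to be careful about is the convention for $f(G)$ when $G$ is weighted (we set $f(G)=\infty$, but such games are excluded from the maximum defining $f(n)$), and the mild point that Theorem~\ref{log} asserts ``$N$-trade robust implies weighted'' rather than producing a certificate of a specific length, but failure of $N$-trade robustness by definition yields a certificate of length at most $N$. So the corollary really is a one-line consequence of Theorem~\ref{log}.
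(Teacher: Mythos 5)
Your proof is correct and takes essentially the same approach as the paper, which states this corollary immediately after Theorem~\ref{log} without any argument, treating it as a direct unpacking of the theorem via the definition of $f$. Your careful note about the convention $f(G)=\infty$ for weighted $G$ and about the contrapositive is exactly the right bookkeeping, though the paper leaves it implicit.
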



\section{A new lower bound for $f(n)$}

Let ${\bf w}=(\row wn)$ be a vector with non-negative coordinates. There may be some linear relations between the coordinates of ${\bf w}$. Let us define those relations that will be important for us. Let $X,Y$ be subsets of $[n]$ such that $X_i\cap Y_i=\emptyset$.  If ${\bf v}_{X,Y}\cdot {\bf w}=0$, which is the same as $\sum_{i\in X}w_i - \sum_{j\in Y}w_j=0$, then we say that the coordinates of ${\bf w}$ are in the relation which corresponds to the vector ${\bf v}= {\bf v}_{X,Y}\in T^n$. \par\medskip

Given $X\subseteq [n]$ we may then introduce $w(X)=\sum_{i\in X}w_i$. For two subsets $X,Y\subseteq [n]$ we write $X\sim Y$ if $w(X)=w(Y)$.  Of course, if this happens, then the coordinates of ${\bf w}$ satisfy the equation ${\bf v}_{X,Y}\cdot {\bf w}=0$. Suppose $X\sim Y$, then the equivalence $X'\sim Y'$, where $X'=X\setminus (X\cap Y)$ and $Y'=Y\setminus (X\cap Y)$ will be called {\em primitive} and $X\sim Y$ will be called a consequence of $X'\sim Y'$.

\begin{example}
\label{f(9)ge 4}
Consider the vector of weights ${\bf w}=(w_1,w_2,w_3,w_4,w_5)=(1,2,5,6,10)$. Then 
\begin{equation}
\label{relations}
w_1+w_3=w_4,\quad w_1+w_4=w_2+w_3,\quad w_2+w_5=w_1+w_3+w_4,\quad w_3+w_4=w_1+w_5
\end{equation}
are relations which correspond to vectors
\begin{equation}
\label{vectors}
(1,0,1,-1,0),\quad (1,-1,-1,1,0),\quad (-1,1,-1,-1,1),\quad (-1,0,1,1,-1),
\end{equation}
respectively. 
It is easy to check that there are no other relations between the coordinates of ${\bf w}$. (Note that we do view $w_1+w_2+w_3=w_2+w_4$ and  $w_1+w_3=w_4$ as the same relation.)

We have\footnote{Here and below we omit curly brackets in the set notation}
\begin{center}
\vspace{4mm}
\begin{tabular}{|c|c|}
\hline
Primitive equivalence & Total weight of equal subsets\\
\hline
$13\sim 4$   & 6 \\
$14\sim 23$ & 7 \\
$\ 25\sim 134$ & 12 \\
$34\sim 15$ & 11\\ 
\hline
\end{tabular}
\vspace{4mm}
\end{center}
Apart from these four equivalences and their consequences there are no other equivalences.
\end{example}

\begin{definition}
Let ${\bf w}=(\row wn)$ be a vector of non-negative coordinates. We will say that ${\bf w}$ satisfies $k$th {\em Fishburn's condition} if  there exist distinct vectors ${\bf v}_i={\bf v}_{X_i,Y_i}\in T^n$, $i=1,\ldots, k$, with $X_i\cap Y_i=\emptyset$, such that:
\begin{itemize}
\item $X_i\sim Y_i$ for $i=1,\ldots,k$, that is ${\bf v}_i\cdot {\bf w}=0$ is a relation for the  coordinates of ${\bf w}$.
\item Apart from $\brow vk$ there are no other relations.
\item $\sum_{i=1}^{k}{\bf v}_i={\bf 0}$,
\item No proper subset of vectors of the system $\{\brow v{k}\}$ is linearly dependent.
\end{itemize}
\end{definition}

For us the importance of this condition is shown in the following

\begin{theorem}
\label{doubling}
Let ${\bf w}=(\row wn)$, $n>2$, be a vector with positive coordinates which satisfies the $k$th Fishburn condition. Then there exists a simple game on $n+k$ players which is $(k-1)$-trade robust but not $k$-trade robust.
\end{theorem}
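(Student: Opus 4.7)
I propose to adjoin $k$ new players $p_1,\dots,p_k$ to $[n]$ and define the game $G$ on $[n]\cup\{p_1,\dots,p_k\}$ via the auxiliary rough weights $\tilde{\bf w}:=(w_1,\dots,w_n,\,Q-c_1,\dots,Q-c_k)$ and quota $Q$, where $c_i:=w(X_i)=w(Y_i)$ and $Q$ is chosen large enough (say $Q>k\sum_{\ell=1}^nw_\ell$) that every $Q-c_i$ is strictly positive. A coalition $\mathcal Z$ is declared winning in $G$ iff $\tilde{\bf w}(\mathcal Z)>Q$, or $\tilde{\bf w}(\mathcal Z)=Q$ and $\mathcal Z\supseteq X_i\cup\{p_i\}$ for some $i$; monotonicity is manifest. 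Set $\mathcal X_i:=X_i\cup\{p_i\}$ and $\mathcal Y_i:=Y_i\cup\{p_i\}$. Both have $\tilde{\bf w}$-weight exactly $Q$, so $\mathcal X_i$ is winning by the tie-breaking clause; $\mathcal Y_i$ is losing because $X_j\cup\{p_j\}\subseteq \mathcal Y_i$ forces $p_j=p_i$ (so $j=i$) and then $X_i\subseteq Y_i$, contradicting $X_i\cap Y_i=\emptyset$ and $X_i\ne\emptyset$. Since the $p_i$-coordinates cancel in $\chi(\mathcal X_i)-\chi(\mathcal Y_i)$, Fishburn's third clause $\sum_i{\bf v}_i={\bf 0}$ lifts to $\sum_{i=1}^k{\bf v}_{\mathcal X_i,\mathcal Y_i}={\bf 0}$ in $T^{n+k}$, a trading transform of length $k$ certifying that $G$ is not $k$-trade-robust.

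For the $(k-1)$-trade-robustness half, assume a certificate $(\mathcal A_j;\mathcal B_j)_{j=1}^m$ with $m\le k-1$ exists. Winning implies $\tilde{\bf w}(\mathcal A_j)\ge Q$; losing implies $\tilde{\bf w}(\mathcal B_j)\le Q$; summing the non-negative scalars ${\bf v}_{\mathcal A_j,\mathcal B_j}\cdot\tilde{\bf w}$ against $\sum_j{\bf v}_{\mathcal A_j,\mathcal B_j}={\bf 0}$ forces $\tilde{\bf w}(\mathcal A_j)=\tilde{\bf w}(\mathcal B_j)=Q$ for every $j$. Writing $T(\mathcal Z):=\{i:p_i\in\mathcal Z\}$ and $t:=|T(\mathcal Z)|$, the identity $\tilde{\bf w}(\mathcal Z)=Q$ becomes $w(\mathcal Z\cap[n])=Q(1-t)+\sum_{i\in T(\mathcal Z)}c_i$, which exceeds $\sum_\ell w_\ell$ when $t=0$ and is negative when $t\ge 2$ by the choice of $Q$; hence every $\mathcal A_j$ and every $\mathcal B_j$ contains exactly one $p_i$. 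Positivity of the $w_\ell$ and the winning clause then force $\mathcal A_j=X_{i_j}\cup\{p_{i_j}\}$ outright, while the losing clause yields $\mathcal B_j=B_j\cup\{p_{i'_j}\}$ with $w(B_j)=c_{i'_j}$ and $X_{i'_j}\not\subseteq B_j$. Fishburn's second clause (no primitive equivalence outside the list ${\bf v}_1,\dots,{\bf v}_k$) is then invoked to conclude $B_j=Y_{i'_j}$.

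With these identifications, projecting $\sum_j{\bf v}_{\mathcal A_j,\mathcal B_j}={\bf 0}$ onto each $p_i$-coordinate gives $\sigma_i:=|\{j:i_j=i\}|=|\{j:i'_j=i\}|$, and projecting onto $[n]$ then gives $\sum_{i=1}^k\sigma_i{\bf v}_i={\bf 0}$. By Fishburn's fourth clause the left null-space of the matrix with rows ${\bf v}_1,\dots,{\bf v}_k$ is one-dimensional and spanned by $(1,\dots,1)$, so the non-negative integer tuple $(\sigma_1,\dots,\sigma_k)$ is a scalar multiple of $(1,\dots,1)$; combined with $\sum_i\sigma_i=m<k$ this forces every $\sigma_i=0$ and hence $m=0$, contradicting non-triviality. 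I expect the most delicate step to be the identification $B_j=Y_{i'_j}$: any alternative weight-$c_{i'_j}$ subset $Z$ with $X_{i'_j}\not\subseteq Z$ would produce a non-trivial primitive pair $(Z\setminus X_{i'_j})\sim(X_{i'_j}\setminus Z)$ that by clause~2 must coincide with some $(X_\ell,Y_\ell)$, and ruling out the resulting nested inclusions of the form $Y_\ell\subseteq X_{i'_j}$ (with $X_\ell\cap X_{i'_j}=\emptyset$) requires a careful combinatorial case analysis that applies clause~2 simultaneously to the dual relation $Z\sim Y_{i'_j}$ and exploits the primitivity $X_\ell\cap Y_\ell=\emptyset$ together with the linear-independence guaranteed by clause~4.
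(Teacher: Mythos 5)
Your construction is essentially the paper's: adjoin $k$ heavy players with weights $Q-c_i$ tuned so that each $\mathcal{X}_i = X_i\cup\{p_i\}$ and $\mathcal{Y}_i = Y_i\cup\{p_i\}$ sits exactly on the threshold $Q$, declare the former winning and the latter losing, and force any shorter certificate to project onto a relation among $\mathbf{v}_1,\dots,\mathbf{v}_k$. Two small points where you diverge in a good way: your explicit tie-breaking rule ($\tilde{\mathbf{w}}(Z)=Q$ wins iff $Z\supseteq\mathcal{X}_i$) makes the game unambiguously defined even without first establishing that the $\mathcal{X}_i,\mathcal{Y}_i$ are the only coalitions on the threshold, whereas the paper's definition relies on that claim; and your closing argument via the one-dimensionality of the kernel of $\lambda\mapsto\sum_i\lambda_i\mathbf{v}_i$ (spanned by the all-ones vector, by clauses 3 and 4) handles the multiplicities $\sigma_i$ cleanly, where the paper's phrasing ``$\mathbf{v}_1+\dots+\mathbf{v}_s=\mathbf{0}$'' elides the fact that the recovered indices $i_1,\dots,i_s$ can repeat.

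The step you flag as delicate --- that a losing threshold coalition $\mathcal{B}_j=B_j\cup\{p_{i'_j}\}$ with $w(B_j)=c_{i'_j}$ forces $B_j=Y_{i'_j}$ --- is precisely the step the paper asserts without argument: ``$w(Z')=s_i$ which implies $Z'=X_i$ or $Z'=Y_i$, a contradiction.'' So you have not introduced a new gap; you have located the one already present in the source, and your sketch (the primitive pair $Z\setminus X_{i'_j}\sim X_{i'_j}\setminus Z$ must be $\pm\mathbf{v}_\ell$, giving inclusions $Y_\ell\subseteq X_{i'_j}$, $X_\ell\cap X_{i'_j}=\emptyset$ that must be ruled out) is a sensible opening move toward filling it. Until that is closed, both your argument and the paper's rest on an unproven uniqueness claim for weight-$c_i$ subsets of $[n]$; it would be worth either supplying the combinatorial argument or rerouting the final contradiction so that it does not require the exact identification $B_j=Y_{i'_j}$, only the relations already extracted from the projections.
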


\begin{proof}
Suppose vectors ${\bf v}_i={\bf v}_{X_i,Y_i}\in T^n$, $i=1,\ldots, k$ are those that are required for the $k$th Fishburn condition. Then by Proposition~\ref{tr=vXY} the sequence ${\cal T}=(\row Xk;\row Yk)$ is a trading transform. Let $w(X)$ be the total weight of the coalition $X$.  Then we have $s_i=w(X_i)=w(Y_i)$ for $i=1,\ldots,k$. Let $N$ be any positive integer greater than $2w(P)$. 

We define 
\[
P'=P\cup \{n+1,\ldots, n+k\},\quad X_i'=X_i\cup \{n+i\},\quad Y_i'=Y_i\cup \{n+i\}. 
\]
Then ${\cal T}_1=(X_1'\ldots, X_k'; Y_1'\ldots, Y_k')$ is obviously also a trading transform. Let us give weight $N-s_i$ to $n+i$. We will call these new elements {\em heavy}. Then
\[
w(X_1')=\ldots=w(X_k')=w(Y_1')=\ldots=w(Y_k')=N.
\]
Moreover, we are going to show that no other subset of $P'$ has weight $N$. Suppose there is a subset $Z\subset P'$ whose total weight is $N$ and which is different from any of the $X_1',\ldots X_k'$ and $Y_1',\ldots Y_k'$. Since $N>2w(P)$ and   $2N-s_i-s_j\ge 2N-2w(P)>N$ holds for any $i,j \in \{1, \ldots, k\}$, then $Z$ must contains no more than one heavy element, say $Z$ contains $n+i$. Then for $Z'=Z\setminus \{n+i\}$ we have $w(Z')=N-(N-s_i)=s_i$ which implies $Z'=X_i$ or $Z'=Y_i$, a contradiction.

Let us now consider the game $G$ on $[n]$ with rough voting representation $[N;\row wn]$, where $X_1',\ldots X_k'$ are winning and $Y_1',\ldots Y_k'$ are losing. Since these are the only subsets on the threshold, the game is fully defined. ${\cal T}_1$ becomes a certificate of non-weightedness for $G$ so it is not $k$-trade robust. Let us prove that it is $(k-1)$-trade robust. Suppose on the contrary, there exists a certificate of non-weightedness for $G$
\begin{equation}
\label{trading_transform_U_V}
{\cal T}_2=(\row Us;\row Vs),\qquad s\le k-1,
\end{equation}
where $\row Us$ are all winning and $\row Vs$ are all losing. Then this can happen only if all these vectors are on the threshold, that is,
\[
w(U_1)=\ldots=w(U_s)=w(V_1)=\ldots=w(V_s)=N,
\]
hence $U_i\in \{X_1',\ldots X_k'\}$ and $V_j\in \{Y_1',\ldots Y_k'\}$. As was proved, any of $U_i$ and any of $V_j$ contain exactly one heavy player. Suppose, without loss of generality that $U_1=X_{i_1}'=X_{i_1}\cup \{n+i_1\}$. Then we must have at least one player $n+i_1$ among the $\row Vs$. Without loss of generality we may assume that $V_1=Y_{i_1}'=Y_{i_1}\cup \{n+i_1\}$. We may now cancel $n+i_1$ from the trading transform (\ref{trading_transform_U_V}) obtaining a certificate of non-weightedness
\[
{\cal T}_3=(X_{i_1},U_2,\ldots,U_s;Y_{i_1}, V_2,\ldots, V_s).
\]
Continuing this way we will come to a certificate of non-weightedness
\[
{\cal T}_4=(X_{i_1},\ldots, X_{i_s};Y_{i_1},\ldots, Y_{i_s}),
\]
which by Proposition 3 will give us ${\bf v}_1+\ldots+{\bf v}_s={\bf 0}$. The latter contradicts to the fact that no proper subset of vectors of the system $\{\brow v{k}\}$ is linearly dependent.
\end{proof}

Fishburn proved the following combinatorial lemma which plays the key role in our construction of games.

\begin{lemma}[Fishburn, 1997]
\label{Fishburn_example}
For every $n\ge 5$ there exists a vector of weights ${\bf w}=(\row wn)$ which satisfies the $(n-1)$-th Fishburn condition.
\end{lemma}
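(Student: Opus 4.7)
I would proceed by induction on $n$, with the base case $n=5$ already established by Example~\ref{f(9)ge 4}, where the vector $(1,2,5,6,10)$ satisfies the $4$-th Fishburn condition. For the inductive step, suppose $\mathbf{w}=(w_1,\ldots,w_n)\in\R_{>0}^n$ satisfies the $(n-1)$-th Fishburn condition via primitive relations $\mathbf{v}_i=\mathbf{v}_{X_i,Y_i}\in T^n$ for $i=1,\ldots,n-1$. I aim to produce $\mathbf{w}'\in\R_{>0}^{n+1}$ satisfying the $n$-th Fishburn condition by \emph{splitting} one old relation into two relations mediated by a fresh player $n+1$.

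Concretely, pick a primitive relation with $|Y_1|\geq 2$, say $\mathbf{v}_1=\mathbf{v}_{X_1,Y_1}$, and a nonempty proper subset $C\subsetneq Y_1$. Choose a large positive integer $M$ and set $w'_i=Mw_i$ for $i\leq n$ and $w'_{n+1}=Mw(C)$. Define
\[
\mathbf{v}'_1=\mathbf{v}_{X_1,\,(Y_1\setminus C)\cup\{n+1\}}, \qquad \mathbf{v}'_n=\mathbf{v}_{\{n+1\},\,C},
\]
and view $\mathbf{v}_2,\ldots,\mathbf{v}_{n-1}$ as elements of $T^{n+1}$ by appending a $0$ in the last coordinate. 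A short calculation shows that each of these $n$ vectors gives a valid equivalence under $\mathbf{w}'$, and that $\mathbf{v}'_1+\mathbf{v}'_n$ equals the extension of $\mathbf{v}_1$. Therefore $\sum_{i=1}^n$ (new vectors) $=\sum_{i=1}^{n-1}\mathbf{v}_i=\mathbf{0}$, which is the third bullet of the Fishburn condition.

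For the minimal-dependence (circuit) bullet, I would check linear independence of every proper subset by a short case analysis. If the omitted vector is some $\mathbf{v}_j$ with $2\leq j\leq n-1$, any linear dependence among the remaining vectors must have equal coefficients on $\mathbf{v}'_1$ and $\mathbf{v}'_n$ (because only these two have nonzero $(n+1)$-th coordinate, with opposite signs), and the dependence then collapses via $\mathbf{v}'_1+\mathbf{v}'_n=\mathbf{v}_1$ to a dependence among $n-2$ of the original circuit vectors $\mathbf{v}_1,\ldots,\mathbf{v}_{n-1}$, contradicting the inductive hypothesis. If $\mathbf{v}'_1$ or $\mathbf{v}'_n$ is omitted, the coefficient on the remaining one must vanish (again by looking at the last coordinate), reducing to independence of $n-2$ vectors among $\mathbf{v}_2,\ldots,\mathbf{v}_{n-1}$, which follows from the circuit property of the old system.

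The hard part is the second bullet: verifying that $\mathbf{w}'$ admits no primitive equivalences beyond the $n$ intended ones. A would-be extra primitive equivalence $\mathbf{v}_{A,B}\cdot\mathbf{w}'=0$ that involves player $n+1$ on one side reduces, after substituting $w'_{n+1}=Mw(C)$ and cancelling $M$, to an equation $w(A')+w(C)=w(B)$ on the old weights, where $A'=A\setminus\{n+1\}$. Such equations need not correspond to consequences of the known primitives of $\mathbf{w}$ — for example, a naive choice of $C$ in the base case already produces the spurious primitive $\{1,6\}\sim\{2,4\}$. To eliminate these, I would exploit the freedom in choosing $C$ (and, if needed, in perturbing the $w_i$ while preserving the $n-1$ intended relations), using a dimension-counting / genericity argument: the set of ``bad'' choices of $C$ that create extra primitives is contained in a finite union of proper algebraic subvarieties of a suitable parameter space, so a good $C$ exists. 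If the direct induction resists this cleanly, an alternative plan is to abandon induction and instead exhibit, for each $n\geq 5$, an explicit family of weights $\mathbf{w}^{(n)}$ generalizing the pattern $(1,2,5,6,10)$ (e.g.\ via a Fibonacci-like recurrence that forces exactly one new subset-sum coincidence at each step), together with an explicit list of $n-1$ primitive relations, and verify all four bullets of the Fishburn condition by direct combinatorial analysis of the subset sums.
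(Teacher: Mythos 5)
The paper does not actually prove this lemma; its ``proof'' is a citation to Fishburn's original articles \cite{PF1,PF2}. So your blind attempt is being compared against Fishburn's construction, not against an argument in this paper.

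Your inductive approach has a genuine gap, and it appears earlier than you anticipate. Immediately after the split, the extension $\tilde{\mathbf{v}}_1$ of $\mathbf{v}_1 = \mathbf{v}_{X_1,Y_1}$ (append a zero coordinate) still satisfies $\tilde{\mathbf{v}}_1 \cdot \mathbf{w}' = M\,\mathbf{v}_1\cdot\mathbf{w} = 0$, and since $X_1\cap Y_1=\emptyset$ it is a primitive relation for $\mathbf{w}'$. But $\tilde{\mathbf{v}}_1$ is not among your $n$ intended relations $\mathbf{v}'_1,\mathbf{v}'_n,\tilde{\mathbf{v}}_2,\dots,\tilde{\mathbf{v}}_{n-1}$, so the second bullet of the Fishburn condition already fails, for every choice of $C$ and every $M$. (It being equal to $\mathbf{v}'_1+\mathbf{v}'_n$ as a vector does not help: the condition is about the list of primitive relations, not about their linear span.) A second, equally unavoidable family of spurious primitives comes from the substitutability of $\{n+1\}$ for $C$: whenever $C$ is contained in one side of another primitive $A\sim B$ of the old circuit, say $C\subseteq A$, the relation $(A\setminus C)\cup\{n+1\} \sim B$ is a new primitive for $\mathbf{w}'$. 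In the base case $\mathbf{w}=(1,2,5,6,10)$, every nonempty proper subset $C$ of any two-sided block $X_i$ or $Y_i$ already occurs as such a chunk in another listed relation, so this obstruction hits every admissible split. Crucially, neither obstruction is a numerical accident: the offending equalities hold for all weight vectors in the two-dimensional solution cone of the intended relations, so your dimension-counting/genericity step has no room to work --- the ``bad set'' is the whole parameter space, not a proper subvariety. A perturbation of the $w_i$ that preserves the intended relations preserves the spurious ones too.

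Your fallback plan (exhibit an explicit family of weights, one for each $n\ge 5$, and verify all four bullets directly) is the realistic path and is much closer in spirit to what Fishburn does in \cite{PF1,PF2}, but you do not carry it out. As written, the proposal does not establish the lemma.
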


\begin{proof}
See  \cite{PF1,PF2}.
\end{proof}

\begin{corollary}
\label{ourtheorem}
For each integer $n\ge 5$, there exists a game with $2n-1$ players, that is $(n-2)$-trade robust but not $(n-1)$-trade robust. Moreover, $n-1\le f(2n-1)$. For an arbitrary $n$
\begin{equation}
\label{in_combined}
\left\lfloor \frac{n-1}{2}\right\rfloor \le f(n).
\end{equation}
\end{corollary}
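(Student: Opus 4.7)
The plan is to derive the corollary as a direct consequence of Lemma~\ref{Fishburn_example} and Theorem~\ref{doubling}, and then to convert the resulting bound on $f(2n-1)$ into a bound on $f(n)$ for arbitrary $n$ via a dummy-player monotonicity argument.

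First I would handle the two specific assertions in a single move. For each $n\ge 5$, Lemma~\ref{Fishburn_example} supplies a positive weight vector $\mathbf{w}=(w_1,\ldots,w_n)$ satisfying the $(n-1)$th Fishburn condition. Feeding $\mathbf{w}$ into Theorem~\ref{doubling} with $k=n-1$ produces, on $n+(n-1)=2n-1$ players, a simple game that is $(n-2)$-trade robust but not $(n-1)$-trade robust; by the definition of $f$, this game witnesses $f(2n-1)\ge n-1$.

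To obtain the inequality $\lfloor(n-1)/2\rfloor\le f(n)$ for arbitrary $n$, I would first establish that $f$ is weakly increasing in $n$. Given a non-weighted game $G$ on $m$ players, form $G'$ on $m+1$ players by adjoining a dummy player, i.e.\ declare $X\subseteq[m+1]$ winning in $G'$ iff $X\cap[m]\in W(G)$. Any certificate of non-weightedness $(X_1,\ldots,X_j;Y_1,\ldots,Y_j)$ for $G'$ restricts, by intersection with $[m]$, to a certificate of the same length for $G$: by Proposition~\ref{tr=vXY} the trading-transform identity $\sum\chi(X_i)=\sum\chi(Y_i)$ survives restriction to $[m]$, and each restricted pair is still winning/losing in $G$. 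Hence $f(m+1)\ge f(m)$. A parity split then closes the argument: for $n=2m-1$ with $m\ge 5$, the previous paragraph gives $f(n)\ge m-1=\lfloor(n-1)/2\rfloor$; for $n=2m$ with $m\ge 5$, monotonicity gives $f(n)\ge f(2m-1)\ge m-1=\lfloor(n-1)/2\rfloor$. The remaining small cases $n\le 8$ carry a bound of at most~$3$ and are absorbed by the automatic inequality $f(G)\ge 2$ (valid for every non-weighted $G$) together with monotonicity from a small non-weighted example such as the one in Example~\ref{n=6}.

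The main obstacle here is purely bookkeeping. Once Lemma~\ref{Fishburn_example} and Theorem~\ref{doubling} are accepted, all genuine mathematical content is settled; the one thing needing verification is that padding by a dummy player leaves the minimum length of a certificate of non-weightedness unchanged, and this is immediate from the algebraic characterisation of trading transforms in Proposition~\ref{tr=vXY}.
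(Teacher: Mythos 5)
Your derivation of the two explicit claims is the paper's argument verbatim: take the $n$-coordinate vector from Lemma~\ref{Fishburn_example} satisfying the $(n-1)$th Fishburn condition, feed it to Theorem~\ref{doubling} with $k=n-1$, and read off a game on $2n-1$ players that is $(n-2)$-trade robust but not $(n-1)$-trade robust, hence $f(2n-1)\ge n-1$. Your dummy-player argument, which the paper collapses into the phrase ``we also trivially have $n-1\le f(2n)$'', is a correct and usefully explicit version of that step: padding preserves non-weightedness, and restricting a certificate for the padded game to $[m]$ (which by Proposition~\ref{tr=vXY} is just projecting away the last coordinate) gives a certificate of the same length for the original game, so $f(m+1)\ge f(m)$. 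The parity split then settles every $n\ge 9$, exactly as the paper intends.

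Where your write-up goes wrong is the paragraph on small cases. For $n=7$ and $n=8$ the target $\left\lfloor (n-1)/2\right\rfloor$ equals $3$, and the ``automatic inequality $f(G)\ge 2$'' together with monotonicity from Example~\ref{n=6} only delivers $f(n)\ge 2$ --- it does not produce a $7$- or $8$-player game that is $2$-trade robust yet non-weighted, which is what $f(n)\ge 3$ requires. Nothing in the toolkit reaches that far down: the Fishburn/doubling construction first lands on $2\cdot 5-1=9$ players, and Gabelman's $Gab_2$ also has $9$ players. So the claim that the small cases are ``absorbed'' is false as stated. To be fair, the paper's own two-line proof has the same hole (its combination of $n-1\le f(2n-1)$ and $n-1\le f(2n)$ for $n\ge 5$ only covers $n\ge 9$), so you are not worse off than the source; but you should not present $f(G)\ge 2$ as closing $n\in\{7,8\}$. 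Either restrict the final inequality to $n\ge 9$ (with $n\le 6$ handled trivially as you say), or supply a separate $7$- and $8$-player example that is $2$-trade robust but not weighted.
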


\begin{proof}
The first part follows immediately from Theorem~\ref{doubling}  and Lemma~\ref{Fishburn_example}. Indeed, in this case the length of the shortest certificate of non-weightedness is $n-1$. We also trivially have $n-1\le f(2n)$.  These two inequalities can be combined into one inequality (\ref{in_combined}).
\end{proof}

\begin{example}[Continuation of Example~6]
Suppose $P=[9]$. The first five players get weights $(w_1,w_2,w_3,w_4,w_5)=(1,2,5,6,10)$. The other four players get weights $(w_6,w_7,w_8,w_9)=(106,105,100,101).$ Then we get the following equivalences:
\begin{center}
\begin{tabular}{|c|c|}
\hline
Equivalence & Total weight of subsets\\
\hline
$13{6}\sim 4{6}$   & \ 6+{106}={ 112} \\
$\ 147\sim 237$ & \ 7+{105}={ 112} \\
$\ \ 258\sim 1348$ & 12+{100}={ 112} \\
$\ 349\sim 159$ & 11+{101}={ 112}\\ 
\hline
\end{tabular}
\end{center}
We define 
\begin{itemize}
\item Coalitions whose total weight  is $>{ 112}$ are winning.
\item Coalitions whose total weight  is $<{ 112}$ are losing.
\item $46,  237, 1348, 159$ are winning.
\item $136, 147, 258, 349$ are losing.
\end{itemize}
This gives us a game with a shortest certificate of length 4, that is, $f(9)\ge 4$. Gabelman's example gives $f(9)\ge 3$.
\end{example}

Fishburn \cite{PF1,PF2} conjectured that a system of $n$ weights cannot satisfy a $n'$th Fishburn condition for $n'\ge n$. This appeared to be not the case. Conder and Slinko \cite{CS} showed that a system of 7 weights can satisfy 7th Fishburn condition.  Conder\footnote{Reported in \cite{SM}} checked that this is also the case for $7\le n\le 13$. Marshall \cite{SM} introduced a class of optimus primes and showed that if $p$ is such a  prime then a system of $p$ weights satisfying $p$th Fishburn condition exists. Although computations show that optimus primes are quite numerous \cite{SM}, it is not known if there are infinitely many of them. The definition of an optimus prime is too technical to give it here.

\begin{corollary}
\label{ourtheorem2}
For each integer $7\le n\le 13$ and also for any $n$ which is an optimus prime,  there exists a game with $2n$ players, that is $(n-1)$-trade robust but not $n$-trade robust. Moreover, $n\le f(2n)$ for such $n$.
\end{corollary}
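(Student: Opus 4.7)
The plan is to reduce the claim to a direct invocation of Theorem~\ref{doubling}, using the results of Conder--Slinko \cite{CS} and Marshall \cite{SM} as the combinatorial input that replaces Fishburn's Lemma~\ref{Fishburn_example} (which only gave the $(n-1)$st Fishburn condition). Concretely, for any $n$ in the specified range (either $7\le n\le 13$, checked by Conder, or any optimus prime), these authors exhibit a vector of positive weights ${\bf w}=(\row wn)$ satisfying the $n$th Fishburn condition.

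First I would feed this ${\bf w}$ into Theorem~\ref{doubling} with the parameter $k=n$. Since ${\bf w}$ has $n$ positive coordinates and satisfies the $n$th (rather than $(n-1)$st) Fishburn condition, the theorem produces a simple game on $n+k=2n$ players that is $(k-1)=(n-1)$-trade robust but not $k=n$-trade robust. Note that the hypothesis $n>2$ required in Theorem~\ref{doubling} is harmless since we only consider $n\ge 7$.

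Finally, translating into the language of the function $f$: the constructed game $G$ is not weighted (failing $n$-trade robustness), yet every trading transform of length at most $n-1$ with winning $X_i$'s and losing $Y_i$'s is forbidden. Hence the shortest certificate of non-weightedness of $G$ has length exactly $n$, which means $f(G)=n$, and therefore $f(2n)\ge n$ for each $n$ as in the statement.

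There is essentially no obstacle here beyond carefully citing the correct ingredient: the non-trivial work is hidden in Theorem~\ref{doubling} and in the existence results of \cite{CS,SM} guaranteeing the $n$th Fishburn condition. In this sense the corollary is the exact analogue of Corollary~\ref{ourtheorem}, with Lemma~\ref{Fishburn_example} replaced by the stronger existence statements for $n$th Fishburn configurations.
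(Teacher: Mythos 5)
Your proposal is correct and takes essentially the same approach as the paper: apply Theorem~\ref{doubling} with $k=n$ to a vector of $n$ positive weights satisfying the $n$th Fishburn condition (supplied by Conder--Slinko and Marshall for $7\le n\le 13$ and for optimus primes), yielding a game on $2n$ players that is $(n-1)$-trade robust but not $n$-trade robust, hence $f(2n)\ge n$. The paper's proof is a one-line pointer ("along the lines of Corollary~\ref{ourtheorem}"); you have simply unpacked that pointer.
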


\begin{proof}
Follows from Theorem~\ref{doubling} along the lines of Corollary~\ref{ourtheorem}.
\end{proof}


\section{A criterion  for  rough weightedness}

The following result that we need in this section is not new either. Kraft et al \cite{KPS} outlined the idea of its proof without much details. Since this result is of fundamental importance to us, we give a full proof in the appendix.

\begin{theorem}
\label{ram1theorem}
Let $A$ be an $m\times n$ matrix with rational coefficients.
Let ${\bf a}_i\in\Q^n$, $i=1,\ldots, m$ be the rows of $A$.
Then the system of linear inequalities $A{\bf x}\ge {\bf 0}$ has no  non-negative solution ${\bf x}\ge {\bf 0}$, other than ${\bf x}={\bf 0}$, iff there exist non-negative integers $\row rm$ and a vector ${\bf u}$ whose all entries are positive integers such that
\begin{equation}
\label{ram1}
\blcomb ram +{\bf u}={\bf 0}.
\end{equation}
\end{theorem}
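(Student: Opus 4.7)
My proof plan treats this as a Farkas-type theorem of alternatives, establishing the two directions separately with the substantive content residing in the forward direction.

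For the easy direction $(\Leftarrow)$, I would assume $\row rm$ and ${\bf u}$ are as in the conclusion, and suppose for contradiction that some ${\bf x} > {\bf 0}$ satisfies $A{\bf x}\ge{\bf 0}$. Taking the dot product of (\ref{ram1}) with ${\bf x}$ yields
\[
\sum_{i=1}^m r_i({\bf a}_i\cdot{\bf x}) + {\bf u}\cdot{\bf x} = 0.
\]
The first sum is non-negative (each $r_i\ge 0$ and each ${\bf a}_i\cdot{\bf x}\ge 0$), while ${\bf u}\cdot{\bf x}>0$ since every entry of ${\bf u}$ is strictly positive and ${\bf x}>{\bf 0}$. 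The contradiction $0>0$ closes this direction.

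For the hard direction $(\Rightarrow)$, the key observation is that, because the cones $\{{\bf x}:A{\bf x}\ge{\bf 0}\}$ and $\{{\bf x}:{\bf x}\ge{\bf 0}\}$ are closed under positive scaling, the hypothesis is equivalent to the infeasibility of the inhomogeneous system
\[
A{\bf x}\ge{\bf 0},\qquad {\bf x}\ge{\bf 0},\qquad {\bf e}^T{\bf x}\ge 1,
\]
where ${\bf e}=(1,\ldots,1)$: any ${\bf x}>{\bf 0}$ fulfilling the first two conditions can be rescaled to fulfil the third. I would then invoke the classical rational Farkas lemma for systems $C{\bf x}\le{\bf b}$---namely, infeasibility is equivalent to the existence of ${\bf y}\ge{\bf 0}$ with $C^T{\bf y}={\bf 0}$ and ${\bf b}^T{\bf y}<0$---applied to the above system rewritten in that form. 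This delivers non-negative rationals $\lambda_1,\ldots,\lambda_m,\mu_1,\ldots,\mu_n$ together with a strictly positive rational $\nu$ satisfying
\[
\sum_{i=1}^m \lambda_i{\bf a}_i + \sum_{k=1}^n \mu_k{\bf e}_k + \nu\,{\bf e} = {\bf 0}.
\]
Setting ${\bf u}:=\sum_k\mu_k{\bf e}_k + \nu\,{\bf e}$ produces a strictly positive rational vector (its $k$-th entry equals $\mu_k+\nu\ge\nu>0$), and clearing denominators by multiplication with a suitable positive integer $D$ yields non-negative integers $r_i=D\lambda_i$ and positive integer entries of $D\,{\bf u}$ that together satisfy (\ref{ram1}).

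The main obstacle will be choosing the right homogenisation. Theorem~\ref{ramtheorem} alone, applied to the augmented matrix whose rows are $\brow am,\brow en$, only guarantees \emph{some} non-trivial non-negative combination summing to ${\bf 0}$ and does not force every coordinate of the resulting residual to be strictly positive. Appending the single inequality ${\bf e}^T{\bf x}\ge 1$ is precisely what is needed: its Farkas multiplier $\nu$ must be strictly positive in any dual certificate of infeasibility (otherwise ${\bf b}^T{\bf y}=-\nu$ would fail to be negative), and this same $\nu$ then floods every coordinate of ${\bf u}$ and guarantees positivity. The final rescaling from rational to integer coefficients is routine because $A$ has rational entries.
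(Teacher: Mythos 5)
Your proof is correct, and it takes a genuinely different route from the paper. The paper's proof is an induction on the dimension $n$: if some column of $A$ has no positive entry it is dropped; otherwise the paper picks a variable $x_1$ with both positive and negative coefficients, scales rows so those coefficients are $\pm 1$, and performs a Fourier--Motzkin elimination of $x_1$ to reduce to an $(n-1)$-variable system, invoking the inductive hypothesis twice (once after eliminating $x_1$ and once to handle a second coordinate) and then combining the two resulting multiplier vectors to make every coordinate of $\mathbf{u}$ positive. Your argument instead homogenises the problem by observing that the hypothesis is equivalent to infeasibility of $A\mathbf{x}\ge\mathbf{0}$, $\mathbf{x}\ge\mathbf{0}$, $\mathbf{e}^T\mathbf{x}\ge 1$, and then applies the classical rational Farkas lemma; the positivity of $\mathbf{u}$ falls out because the dual multiplier $\nu$ on the constraint $\mathbf{e}^T\mathbf{x}\ge 1$ is forced strictly positive and floods every coordinate. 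Your approach is shorter and more modular but delegates the real work to the (rational) Farkas lemma as a black box, whereas the paper's is self-contained and elementary, essentially reproving Farkas in the form it needs. Your remark that a naive application of Theorem~\ref{ramtheorem} to the augmented matrix with rows $\brow am,\brow en$ fails is also correct and well observed: that theorem concerns strict inequalities $A'\mathbf{x}\gg\mathbf{0}$, which is a genuinely stronger feasibility requirement than ``$A\mathbf{x}\ge\mathbf{0}$, $\mathbf{x}>\mathbf{0}$,'' and its alternative only yields a nonzero non-negative combination summing to $\mathbf{0}$, not one in which all $n$ coordinates of the residual $\mathbf{u}$ are positive. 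The only point worth stating explicitly in a final write-up is the existence of a \emph{rational} dual certificate: since $A$ is rational, the Farkas alternative set $\{\mathbf{y}\ge\mathbf{0}: C^T\mathbf{y}=\mathbf{0},\ \mathbf{b}^T\mathbf{y}\le -1\}$ is a nonempty rational polyhedron and therefore contains a rational point, after which clearing denominators is indeed routine.
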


\begin{definition}
A certificate of non-weightedness, which includes $P$ and $\emptyset $ we will call {\em potent}. 
\end{definition}

We saw such a certificate in (\ref{Fano_potent_certificate}) for Fano plane game. Now we can give a criterion for a game to be roughly weighted.

\begin{theorem}[Criterion of rough weightedness]
\label{Cr_0f_nrw}

The game $G$ with $n$ players is roughly weighted  if one of the two equivalent statements hold:
\begin{enumerate}
\item[(a)] for no positive integer  $j\le (n+1)2^{\frac{1}{2}n\log_2n}$ there exist a potent certificate of non-weightedness of length $j$,
\item[(b)] for no positive integer  $j\le (n+1)2^{\frac{1}{2}n\log_2n}$ there exist $j$ vectors $\brow vj\in I(G)$  such that
\begin{equation}
\label{vjs}
{\bf v}_1+\ldots+{\bf v}_j+{\bf 1}={\bf 0},
\end{equation}
where ${\bf 1}=(1,1,\ldots,1)$.
\end{enumerate}
\end{theorem}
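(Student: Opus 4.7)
The plan is to establish (a) $\Leftrightarrow$ (b) structurally and then prove that $G$ is roughly weighted if and only if (b) holds. For the structural equivalence: a potent certificate $(\row Xj;\row Yj)$ contains $P$ among the $X_i$'s and $\emptyset$ among the $Y_i$'s, so after relabeling the pairing in the trading transform one of the $j$ summands in Proposition~\ref{tr=vXY} is $\mathbf{v}_{P,\emptyset}=\mathbf{1}$ and the remaining $j-1$ vectors in $I(G)$ sum to $-\mathbf{1}$, matching (b); conversely, appending $\mathbf{v}_{P,\emptyset}$ to any decomposition in (b) produces a potent certificate.

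For the main equivalence, the easy direction is: if $G$ is roughly weighted then by Proposition~\ref{w&rw}(b) there exists a non-negative non-zero $\mathbf{w}$ with $\mathbf{v}\cdot\mathbf{w}\ge 0$ for every $\mathbf{v}\in I(G)$; dotting $\mathbf{v}_1+\cdots+\mathbf{v}_j+\mathbf{1}=\mathbf{0}$ with $\mathbf{w}$ gives $\sum\mathbf{v}_i\cdot\mathbf{w}=-\sum w_i$, whose left side is non-negative and right side strictly negative, a contradiction. Conversely, assume $G$ is not roughly weighted. Theorem~\ref{ram1theorem} applied to the matrix whose rows are the vectors in $I(G)$ yields
\[
r_1\mathbf{v}_1+\cdots+r_m\mathbf{v}_m+\mathbf{u}=\mathbf{0},
\]
with $r_i\ge 0$ integer, $\mathbf{v}_i\in I(G)$ and $\mathbf{u}$ an integer vector satisfying $\mathbf{u}\ge\mathbf{1}$. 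I then exploit Proposition~\ref{I(G)} to reduce $\mathbf{u}$ to $\mathbf{1}$ without changing $\sum r_i$: while some $u_i\ge 2$, not every vector in the current multiset can have $i$-th coordinate equal to $1$ (else that coordinate of the sum would be non-negative whereas it must equal $-u_i<0$), so some $\mathbf{v}_k$ has $v_{k,i}\le 0$; replace one copy of $\mathbf{v}_k$ by $\mathbf{v}_k+\mathbf{e}_i\in I(G)$, which adds $\mathbf{e}_i$ to the sum and decrements $u_i$ by one. Iterating produces a decomposition of the form required by (b).

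It remains to bound the length by $N=(n+1)2^{\frac{1}{2}n\log_2 n}$. I pass to the linear program $\sum_{\mathbf{w}\in I(G)}\lambda_{\mathbf{w}}\mathbf{w}=-\mathbf{1}$, $\lambda\ge\mathbf{0}$, which is feasible by the integer construction above, and select a basic feasible solution with support of size $m\le n$ on linearly independent vectors $\mathbf{w}_1,\ldots,\mathbf{w}_m\in I(G)$. Choosing an invertible $m\times m$ submatrix $A'$ of $[\mathbf{w}_1\,|\cdots|\,\mathbf{w}_m]$, Cramer's rule and Hadamard's inequality give $\lambda_i=\det B'_i/\det A'$ with $|\det B'_i|,|\det A'|\le m^{m/2}\le n^{n/2}$. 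Scaling by $d=|\det A'|\le n^{n/2}$ produces integer coefficients $c_i=|\det B'_i|\le n^{n/2}$ satisfying $c_1\mathbf{w}_1+\cdots+c_m\mathbf{w}_m=-d\mathbf{1}$; adding $d-1$ copies of $\mathbf{v}_{P,\emptyset}=\mathbf{1}\in I(G)$ on the left yields a decomposition $\sum\mathbf{v}_i+\mathbf{1}=\mathbf{0}$ of length at most $m\cdot m^{m/2}+(m^{m/2}-1)\le (n+1)n^{n/2}-1<N$. The main obstacle is exactly this passage from a rational LP solution to a non-negative \emph{integer} decomposition with right-hand side equal to $-\mathbf{1}$ (not $-d\mathbf{1}$); the key is that $\mathbf{v}_{P,\emptyset}\in I(G)$ by non-triviality, so the unit shortfall introduced by clearing denominators can be absorbed at the price of $d-1<n^{n/2}$ extra summands, which the arithmetic above shows still fits inside the bound $N$.
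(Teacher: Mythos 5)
Your proof is correct and follows essentially the same route as the paper's: equivalence of (a) and (b) via Proposition~\ref{tr=vXY}, the forward direction by dotting $\sum\mathbf{v}_i+\mathbf{1}=\mathbf{0}$ with a rough-weight vector from Proposition~\ref{w&rw}(b), and the converse by applying Theorem~\ref{ram1theorem} and then absorbing the surplus in $\mathbf{u}$ into the $\mathbf{v}_i$'s using the ideal property (Proposition~\ref{I(G)}) until $\mathbf{u}=\mathbf{1}$. The one place you do noticeably more work than the paper is the length bound: the paper merely gestures at Theorem~\ref{log}, whereas you give a self-contained argument (basic feasible solution of $\sum\lambda_{\mathbf{w}}\mathbf{w}=-\mathbf{1}$, Cramer's rule plus Hadamard, and then the key observation that the extra factor $d=|\det A'|$ can be paid for by appending $d-1$ copies of $\mathbf{v}_{P,\emptyset}=\mathbf{1}\in I(G)$, giving total length $\le(n+1)n^{n/2}-1<N$); this explicitly addresses the mismatch between $-d\mathbf{1}$ and $-\mathbf{1}$ that the paper glosses over, and is a genuine tightening of the exposition.
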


\begin{proof}
Since ${\bf 1}={\bf v}_{P,\emptyset}$,  by Proposition~\ref{tr=vXY} we know that (a) and (b) are equivalent. We also note that, as in Theorem~\ref{log}, it can be shown that if a relation (\ref{vjs}) holds in an $n$-player game for some $j$, then there is another such relation with $j\le (n+1)2^{\frac{1}{2}n\log_2n}$. \par

Suppose that (\ref{vjs}) is satisfied but $G$ is roughly weighted. By Proposition~\ref{w&rw} this means that the system
\[
{\bf v}_i\cdot {\bf x}\ge 0,\qquad i=1,2,\ldots, j
\]
has a non-zero non-negative solution, let us call it also ${\bf x}_0$. Then
\[
0=({\bf v}_1+\ldots+{\bf v}_j+{\bf 1})\cdot {\bf x}_0\ge |{\bf x}_0|> {\bf 0},
\]
where $|{\bf x}|$ denotes the sum of all coordinates of ${\bf x}$. This is a contradiction.

Let us suppose now that a system of rough weights for the game $G$ does not exist. Then the system (\ref{inequalityge}) has no solution and by Theorem~\ref{ram1theorem} there exist vectors $\brow vm\in I(G)$ and a vector ${\bf u}$ whose all coordinates are positive integers and such that
\begin{equation}
\label{vvu}
{\bf v}_1+\ldots+{\bf v}_m +{\bf u} ={\bf 0}
\end{equation}
(where not all of the vectors ${\bf v}_i$ may be different). Let us consider the relation (\ref{vvu})  with the smallest sum $|{\bf u}|=u_1+\ldots+u_n$ of coordinates of ${\bf u}$. If $|{\bf u}|=n$ we are done. Suppose $u_i>1$ for some $i\in [n]$. Then we can find $j\in [n]$ such that the $i$th coordinate of ${\bf v}_j$ is $-1$. Then ${\bf v}'_j={\bf v}_j+{\bf e}_i\in I(G)$ and we can write
\[
{\bf v}_1+\ldots+{\bf v}_j'+\ldots+{\bf v}_m +{\bf u}' ={\bf 0},
\]
where ${\bf u}'={\bf u}-{\bf e}_i$. Since all coordinates of ${\bf u}'$ are positive integers and their sum is $|{\bf u}|-1$ this contradicts to the minimality of $|{\bf u}|$.
\end{proof}

The game Fano can be generalised in several different ways. We will consider two such generalisations.

\begin{example}[Hadamard games]
An Hadamard matrix $H$ of order $n\times n$ is a matrix with entries $\pm 1$ such that $H^TH=HH^T=I_n$, where $I_n$ is the identity matrix of order $n$.  The latter condition is equivalent to the system of rows of $H$ as well as the system of columns being orthogonal. The standard example of Hadamard matrices is the sequence 
\begin{equation}
\label{Had}
H_1=\twomat 111{-1},\qquad H_{k+1}=\twomat {H_k}{H_k}{H_k}{{-H_k}}
\end{equation}
discovered by Sylvester in 1867 \cite{Syl}. Here $H_k$ is $2^k\times 2^k$ matrix. It is known that the order of an Hadamard matrix must be divisible by four and the hypothesis is being tested that for any $k$ an Hadamard matrix of order $4k$ exists. However it has not been proven and the smallest $k$ number for which it is not known whether or not an Hadamard matrix of order $4k$ exists is currently  167 \cite{428}.

Suppose now that an Hadamard matrtix of order $n>4$ exists. In a usual way (by multiplying certain rows and columns by $-1$, if necessary) we may assume that all integers in the first row and in the first column of $H$ are $1$. Then we consider the matrix $\overline{H}$ which is $H$ without its first row and its first column.  The game $HG_{n-1}=(P,W)$ will be defined on the set of players $P=[n-1]$. We consider the rows of $\overline{H}$ and view them as the characteristic vectors of subsets $\row X{n-1}$. Any two rows of $H$ are orthogonal which implies that the number of places where these two rows differ are equal to the number of places where they coincide. However, if $X_i\cap X_j= \emptyset $, then the number of places where the two rows differ would be $2(n/2-1)=n-2$ which is greater than $n/2$ for $n>4$. Hence $X_i\cap X_j\ne \emptyset $ for any $i,j$.  Let us consider $\row X{n-1}$ as minimal winning coalitions of $HG_{n-1}$.  It is easy to see that the Hadamard game $HG_7$ obtained from $H_3$ is the Fano plane game.
\end{example}

\begin{definition}
A game with $n$ players will be called {\em cyclic} if the charactiristic vectors of minimal winning coalitions consist of a vector  ${\bf w}\in\Z_2^n$ and all its cyclic permutations. We will denote it $C({\bf w})$.
\end{definition}

It is not difficult to see that  the game Fano in Example~\ref{projective} is cyclic.

\begin{theorem}\label{pr_cyc}
Suppose that the Hamming weight of ${\bf w}\in\Z_2^n$ is smaller than $n/2$.  Suppose the game $C({\bf w})$ is proper. Then it is not roughly weighted.
\end{theorem}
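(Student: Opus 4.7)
The plan is to apply Proposition~\ref{w&rw}(b) directly: to show that $C({\bf w})$ is not roughly weighted, I will verify that the system ${\bf v}\cdot{\bf x}\ge 0$, ${\bf v}\in I(C({\bf w}))$, admits no non-zero non-negative solution ${\bf x}\ge{\bf 0}$. Let $k$ denote the Hamming weight of ${\bf w}$, so by hypothesis $k<n/2$, and let $X_0,X_1,\ldots,X_{n-1}$ be the $n$ cyclic shifts of ${\bf w}$ viewed as subsets of $[n]$. By the definition of $C({\bf w})$, each $X_i$ is a (minimal) winning coalition; by properness each complement $X_i^c$ is losing; hence the vectors ${\bf v}_{X_i,X_i^c}$ all belong to $I(C({\bf w}))$.

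Now suppose ${\bf x}\ge{\bf 0}$ satisfies the system. The inequality ${\bf v}_{X_i,X_i^c}\cdot{\bf x}\ge 0$ reads $\sum_{j\in X_i}x_j\ge \sum_{j\in X_i^c}x_j$. Summing these $n$ inequalities and exploiting cyclic symmetry---which forces each coordinate of ${\bf x}$ to appear in exactly $k$ of the $X_i$ (with multiplicity, if ${\bf w}$ has a nontrivial cyclic stabiliser) and in exactly $n-k$ of the complements---one obtains $k\,|{\bf x}|\ge (n-k)\,|{\bf x}|$, that is, $(2k-n)|{\bf x}|\ge 0$, where $|{\bf x}|=\sum_j x_j$. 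Because $2k<n$, this forces $|{\bf x}|\le 0$, and combined with ${\bf x}\ge{\bf 0}$ we conclude ${\bf x}={\bf 0}$, finishing the proof.

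The argument has essentially no nontrivial obstacle; the two ingredients are properness, which supplies the losing complements $X_i^c$, and cyclic symmetry, which collapses the averaged inequality to a single constraint on $|{\bf x}|$. A more laborious alternative would be to construct an explicit potent certificate via Theorem~\ref{Cr_0f_nrw}: summing the ${\bf v}_{X_i,X_i^c}$ gives $(2k-n){\bf 1}$, which cleanly matches condition (b) only when $n-2k=1$ (as in the Fano example with $n=7$, $k=3$); for general $k$ one would need to absorb the excess using further vectors of $I(G)$, but this detour is unnecessary because Proposition~\ref{w&rw}(b) already delivers the conclusion.
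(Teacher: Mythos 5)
Your proof is correct, but it takes a genuinely different (though closely related) route from the paper's. The paper's proof works on the combinatorial side: using the same count you exploit (each player lies in exactly $k$ of the $n$ cyclic shifts and in $n-k$ of their complements), it exhibits the explicit trading transform
\[
{\cal T}=(X_1,\ldots,X_n,\underbrace{P,\ldots, P}_{n-2k}\,;\,X_1^c,\ldots,X_n^c,\underbrace{\emptyset,\ldots, \emptyset}_{n-2k}),
\]
observes that this is a potent certificate of non-weightedness (the $X_i$ are winning, the $X_i^c$ are losing by properness, and $P,\emptyset$ are present), and invokes Theorem~\ref{Cr_0f_nrw}. Your argument is precisely the Farkas dual of this: you pair the vector identity $\sum_i {\bf v}_{X_i,X_i^c}=(2k-n){\bf 1}$ with a putative non-negative solution ${\bf x}$ and read off the contradiction $(2k-n)|{\bf x}|\ge 0$. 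Both proofs rest on the same one-line computation; the paper's buys an explicit certificate (and hence a direct illustration of the criterion it has just proved), while yours is slightly shorter but appeals to Proposition~\ref{w&rw}(b). Two small points. First, Proposition~\ref{w&rw}(b) as stated speaks of a non-zero solution, not a non-zero non-negative solution; your argument genuinely uses ${\bf x}\ge{\bf 0}$ to pass from $|{\bf x}|\le 0$ to ${\bf x}={\bf 0}$, so you should note (as the paper's own proof of Proposition~\ref{w&rw}(b) effectively establishes, via zeroing out negative coordinates) that the existence of a non-zero solution is equivalent to the existence of a non-zero non-negative one. Second, your closing remark that the certificate route ``cleanly matches condition (b) only when $n-2k=1$'' underrates it: the excess is absorbed simply by padding with $n-2k-1$ further copies of ${\bf v}_{P,\emptyset}$ (equivalently $n-2k$ copies of the pair $P,\emptyset$ in the trading transform), which is exactly what the paper does, so neither route is more laborious than the other.
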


\begin{proof}
Let $X_1,\ldots, X_n$ correspond to the characteristic vectors, which are ${\bf w}$ and all its cyclic permutations. Suppose that the Hamming weight of ${\bf w}$ is $k$.  Then the sequence 
\[
{\cal T}=(X_1,\ldots,X_n,\underbrace{P,\ldots, P}_{n-2k};X_1^c,\ldots,X_n^c,\underbrace{\emptyset,\ldots, \emptyset}_{n-2k})
\] 
is a trading transform. Since the game is proper, $X_1,\ldots, X_n \in W$ and $X_1^c, \ldots, X_n^c \in L.$ Thus by Theorem~\ref{Cr_0f_nrw} the game $C({\bf w})$ is not roughly weighted.
\end{proof}

Richardson  \cite{Rich}  studied the following class of games that generalise the Fano game.
Let $q=p^r$, where $p$ is prime. Let $GF(q)$ be the Galois field with $q$ elements and $PG(n,q)$ be the projective $n$-dimensional space over $GF(q)$. It is known \cite{Rich, Hall} that $PG(n,q)$ contains $\frac{q^{n+1}-1}{q-1}$ points and any its $(n-1)$-dimensional subspace consists of $\frac{q^n-1}{q-1}$ points. Any two such subspaces have $\frac{q^{n-1}-1}{q-1}$ points in their intersection. 

We define a game  $Pr_{n,q}=(PG(n,q),W)$ by defining the set $W^m$ of all minimal winning coalition be the set of all $(n-1)$-dimensional subspaces of $PG(n,q)$.
This class of games is known as {\em projective games}. These games are cyclic by Singer's theorem (see, e.g., \cite{Hall}, p. 156).

\begin{corollary}
Any projective game is not roughly weighted.
\end{corollary}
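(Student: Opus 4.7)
The plan is to verify the three hypotheses of Theorem~\ref{pr_cyc} for $Pr_{n,q}$, after which the conclusion is immediate. The text has just observed, via Singer's theorem, that $Pr_{n,q}$ is cyclic: a Singer cycle is a cyclic subgroup of $PGL(n+1,q)$ of order $N_p:=(q^{n+1}-1)/(q-1)$ that acts regularly on the points of $PG(n,q)$ and transitively on hyperplanes. Labelling the $N_p$ points by a single orbit of a generator, fixing any one hyperplane $H_0$, and letting ${\bf w}\in\Z_2^{N_p}$ be its characteristic vector, the remaining hyperplanes appear precisely as the cyclic shifts of ${\bf w}$. Thus $Pr_{n,q}=C({\bf w})$ in the paper's notation.

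The second hypothesis I would check is the Hamming-weight condition. The weight of ${\bf w}$ equals the number of points on a hyperplane, namely $N_H:=(q^n-1)/(q-1)$. The required inequality $N_H<N_p/2$ rearranges to $2(q^n-1)<q^{n+1}-1$, i.e.\ $q^n(q-2)+1>0$, which holds for every prime power $q\ge 2$.

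For properness I would invoke the standard fact that any two distinct hyperplanes of $PG(n,q)$ with $n\ge 2$ meet in an $(n-2)$-dimensional subspace containing $(q^{n-1}-1)/(q-1)\ge 1$ points. So the complement of any hyperplane contains no hyperplane; and since every winning coalition $X$ contains some hyperplane $H$, we get $X^c\subseteq H^c$, which also contains no hyperplane and is therefore losing. Hence $Pr_{n,q}$ is proper, and Theorem~\ref{pr_cyc} delivers the corollary.

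The argument is really bookkeeping: once the standard hyperplane combinatorics of $PG(n,q)$ is recalled, no single step is delicate, and the cyclic structure is handed to us by Singer. The main obstacle is merely notational—keeping the dimension $n$ separate from the ``number of players'' $n$ in Theorem~\ref{pr_cyc}, and tacitly restricting to $n\ge 2$ (for $n=1$ a hyperplane is a single point, $Pr_{1,q}$ is trivially weighted, and properness fails).
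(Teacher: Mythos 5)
Your proof is correct and follows essentially the same route as the paper: reduce to Theorem~\ref{pr_cyc} by invoking Singer's theorem for cyclicity and the fact that any two hyperplanes of $PG(n,q)$ meet for properness. You go a little further than the paper by explicitly verifying the Hamming-weight hypothesis $(q^n-1)/(q-1)<\tfrac12(q^{n+1}-1)/(q-1)$, a check the paper's own proof silently omits but which is indeed needed.
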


\begin{proof}
We note that, since any two winning coalitions of $Pr_{n,q}$ intersect, this game is proper.
By Singer's Theorem  any projective game $Pr_{n,q}$ is cyclic. 
Now the statement follows from Theorem~\ref{pr_cyc}.
\end{proof}



\section{AT-LEAST-HALF Property}

We can also characterise rough weightedness in terms of EL sequences similar to Theorem~2.4.6 of \cite{TZ}. We remind to the reader that a coalition is {\em blocking} if it is a complement of a losing coalition. A sequence of coalitions $(\row Z{2k})$ is called an EL {\em sequence} of degree $k$ (see \cite{TZ}, p. 61) if half of its coalitions are winning and half are blocking.  

\begin{definition}
A simple game satisfies AT-LEAST-HALF  PROPERTY of degree $k$ if any EL sequence of degree $k$ or less has some player occurring in at least half of the coalitions in the sequence.
\end{definition}

\begin{theorem}
For a simple game $G$ the following are equivalent:
\begin{enumerate}
\item[(i)] $G$ is roughly weighted.
\item[(ii)] $G$ has at-least-half property of degree $(n+1)2^{\frac{1}{2}n\log_2n}$.
\end{enumerate}
\end{theorem}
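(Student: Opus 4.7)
My plan is to prove both implications by contrapositive, after establishing a tight dictionary between EL sequences and linear identities in $I(G)$. Given an EL sequence $(Z_1,\ldots,Z_{2k})$ of degree $k$, say with $Z_1,\ldots,Z_k\in W$ and $Z_{k+1},\ldots,Z_{2k}$ blocking, set $X_i:=Z_i\in W$ and $Y_i:=Z_{k+i}^c\in L$ for $i=1,\ldots,k$. A direct count gives, for every player $a$,
\[
|\{l\,:\,a\in Z_l\}|=n_X(a)+k-n_Y(a),
\]
where $n_X(a)$ and $n_Y(a)$ denote the multiplicities of $a$ in the multisets $\{X_i\}$ and $\{Y_i\}$. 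Hence the statement ``no player occurs in at least $k$ of the $Z_l$'s'' is equivalent to $n_Y(a)\ge n_X(a)+1$ for every $a$, i.e.\ to the existence of a vector ${\bf u}$ with all coordinates positive integers satisfying $\sum_{i=1}^k {\bf v}_{X_i,Y_i}+{\bf u}={\bf 0}$.

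For (ii)$\Rightarrow$(i), I would argue the contrapositive. If some EL sequence of degree $k\le N:=(n+1)2^{\frac{1}{2}n\log_2n}$ witnesses failure of AT-LEAST-HALF, the translation above yields ${\bf v}_1,\ldots,{\bf v}_k\in I(G)$ and ${\bf u}\gg{\bf 0}$ with $\sum_i{\bf v}_i+{\bf u}={\bf 0}$. Any non-zero non-negative ${\bf x}$ solving (\ref{inequalityge}) would then give $0=\sum_i{\bf v}_i\cdot{\bf x}+{\bf u}\cdot{\bf x}\ge{\bf u}\cdot{\bf x}>0$, a contradiction, so (\ref{inequalityge}) admits no non-zero non-negative solution; Proposition~\ref{w&rw}(b) then says $G$ is not roughly weighted. (Observe that the bound $k\le N$ is not actually needed here.)

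For the converse (i)$\Rightarrow$(ii), I again argue contrapositively. Assume $G$ is not roughly weighted. Theorem~\ref{Cr_0f_nrw}(b) supplies ${\bf v}_1,\ldots,{\bf v}_j\in I(G)$ with $j\le N$ and $\sum_{i=1}^j{\bf v}_i+{\bf 1}={\bf 0}$. Picking $X_i\in W$, $Y_i\in L$ with ${\bf v}_i={\bf v}_{X_i,Y_i}$, I form the EL sequence of degree $j$
\[
(X_1,\ldots,X_j,\,Y_1^c,\ldots,Y_j^c).
\]
The identity forces $n_Y(a)=n_X(a)+1$ for every player $a$, and the count formula above yields $|\{l:a\in Z_l\}|=j-1<j$, so no player appears in at least half of the coalitions. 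This refutes AT-LEAST-HALF at degree $j\le N$, as required. The main obstacle is purely organisational: one must verify that the distinguished ${\bf 1}$-summand in (\ref{vjs}), which is exactly what separates rough weightedness from plain weightedness in Theorem~\ref{Cr_0f_nrw}, corresponds precisely to the strict gap $n_X(a)<n_Y(a)$ that causes ``at least half'' to fail, and that the length bound $j\le N$ from Theorem~\ref{Cr_0f_nrw} transfers cleanly to a bound on the degree of the violating EL sequence.
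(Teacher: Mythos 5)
Your proof is mathematically correct, but the two implications are mislabeled: the first paragraph assumes failure of AT-LEAST-HALF (i.e., $\neg$(ii)) and concludes that $G$ is not roughly weighted ($\neg$(i)), which is the contrapositive of (i)$\Rightarrow$(ii); the second paragraph assumes $\neg$(i) and derives $\neg$(ii), which is the contrapositive of (ii)$\Rightarrow$(i). Swap the labels and the argument stands.

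The dictionary you set up --- a degree-$k$ violation of AT-LEAST-HALF corresponds exactly to $\brow vk\in I(G)$ together with an integral ${\bf u}\gg{\bf 0}$ satisfying $\sum_{i}{\bf v}_i+{\bf u}={\bf 0}$ --- encodes the same count the paper performs, packaged algebraically. In the direction $\neg$(i)$\Rightarrow\neg$(ii) the two proofs then essentially coincide: both obtain a length-$j$ relation $\sum{\bf v}_i+{\bf 1}={\bf 0}$ with $j\le N$ from Theorem~\ref{Cr_0f_nrw}, form the EL sequence $(\row Xj,\,Y_1^c,\ldots,Y_j^c)$, and verify that each player occurs exactly $j-1<j$ times. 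In the direction $\neg$(ii)$\Rightarrow\neg$(i) your route is genuinely different: the paper argues concretely with a rough voting representation $[q;\row wn]$, comparing upper and lower estimates of the total weight of the coalitions in the EL sequence, whereas you dot the identity $\sum_i{\bf v}_i+{\bf u}={\bf 0}$ against a non-zero non-negative solution ${\bf x}$ of (\ref{inequalityge}) furnished by Proposition~\ref{w&rw}(b), obtaining the contradiction $0\ge{\bf u}\cdot{\bf x}>0$. Both are sound; yours makes the two directions cleanly dual under the dictionary, while the paper's is more elementary and explicit. Your remark that the degree bound $k\le N$ plays no role in this direction is correct, and it is implicitly true of the paper's version of that direction as well.
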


\begin{proof}
Suppose $G$ is roughly weighted and let $[q;\row wn]$ be its rough voting representation. Let $Z=(\row Z{2k})$ be an EL sequence. Then any winning coalition in $Z$ has weight of at least $q$ and any blocking coalition in $Z$ has weight of at least $\Sigma -q$, where $\Sigma=\sum_{i=1}^nw_i$. The total weight of coalitions in $Z$ is therefore at least $kq+k(\Sigma -q)=k\Sigma$. If (ii) is not satisfied, then any player occurs in the sequence less than $k$ times and the total weight of coalitions in $Z$ is therefore strictly less than $\sum_{i=1}^nkw_i=k\Sigma$, which is a contradiction. Hence (i) implies (ii).

Suppose now that (ii) is satisfied but $G$ is not roughly weighted. Then there exist a potent certificate of non-weightedness
\[
{\cal T}=(\row Xk, P;\row Yk,\emptyset),
\]
where $k\le (n+1)2^{\frac{1}{2}n\log_2n}$. Then the sequence $Z=(\row Xk;Y_1^c, ..., Y_k^c)$ is an EL sequence. Consider an arbitrary player $a$. For a certain positive integer $s$ it occurs $s$ times in the subsequence $Z'=(\row Xk, P)$ and $s$ times in the subsequence $Z''=(\row Yk, \emptyset)$. Thus in $Z$ it will occur $(s-1)+(k-s)=k-1$ times, which is less than half of $2k$ and at-least-half property does not hold. By the Criterion of rough weightedness (Theorem~\ref{Cr_0f_nrw}) we conclude that $G$ is roughly weighted.
\end{proof}


\section{Function $g$.}

Suppose now that we have to check if a game $G$ is roughly weighted or not. According to the Criterion of Rough Weightedness (Theorem\ref{Cr_0f_nrw}) we have to check if there are any potent certificates of non-weightedness. We have to know where to stop while checking those. We will define a new function for this.  If the game is roughly weighted let us set $g(G)=\infty$.  Alternatively, $g(G)$ is the  length of the shortest potent certificate of non-weightedness for $G$. We also define a function
\[
g(n)=\max_G g(G),
\]
where maximum is taken over non roughly-weighted games with $n$ players. Checking rough weightedness we then have to check all potent certificates of non-weightedness up to a length $g(n)$.\par\medskip

For the Fano plane game in Example~\ref{projective} we have a potent certificate of non-weightedness (\ref{Fano_potent_certificate}) which has length $8$. We will prove that this is the shortest potent certificate for this game.

\begin{theorem}
\label{g(projective)}
$g(Fano)=8$. 
\end{theorem}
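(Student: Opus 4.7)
The plan is to match the upper bound $g(Fano) \le 8$, which is already furnished by the explicit potent certificate (\ref{Fano_potent_certificate}), with a matching lower bound $g(Fano) \ge 8$.

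First I would set up a counting argument. Suppose $(A_1,\ldots,A_j;B_1,\ldots,B_j)$ is any potent certificate of non-weightedness for $Fano$, let $a \ge 1$ denote the number of indices $i$ with $A_i = P$, and let $b \ge 1$ denote the number of indices with $B_i = \emptyset$. By Proposition~\ref{tr=vXY}, $\sum_{i=1}^j \chi(A_i) = \sum_{i=1}^j \chi(B_i)$. The next step is to exploit the constant-sum property of $Fano$: for each $A_i \ne P$, the complement $A_i^c$ is a nonempty losing coalition and $\chi(A_i) = \mathbf{1} - \chi(A_i^c)$. Substituting this and using $\chi(\emptyset) = \mathbf{0}$ yields the identity
\[
j\cdot\mathbf{1} \;=\; \sum_{A_i \ne P} \chi(A_i^c) \;+\; \sum_{B_i \ne \emptyset} \chi(B_i),
\]
which expresses the vector $(j,j,\ldots,j)$ as a sum of exactly $2j-a-b$ characteristic vectors of nonempty losing coalitions.

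The heart of the argument is the observation that in $Fano$ every losing coalition has size at most $4$: any coalition of size $\ge 5$ has complement of size $\le 2$, which contains no line, hence (by the constant-sum property) the coalition is winning. Summing the coordinates in the displayed identity therefore yields $7j \le 4(2j - a - b)$, i.e.\ $j \ge 4(a+b) \ge 8$. Combined with the length-$8$ certificate (\ref{Fano_potent_certificate}) this forces $g(Fano) = 8$.

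I do not foresee a serious obstacle: once one thinks to pass to complements and use the uniform size bound of $4$ on losing coalitions, the counting is immediate. The decisive feature is that the extra coalitions $P$ and $\emptyset$, required to make the certificate potent, push each coordinate of the right-hand side up to $j$, so that seven units per player must be absorbed by pieces each of size at most $4$; this is exactly what forces $j \ge 8$.
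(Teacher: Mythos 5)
Your proof is correct, and the arithmetic checks out: the upper bound $g(Fano)\le 8$ is supplied by the explicit certificate~(\ref{Fano_potent_certificate}), and your counting argument establishes $g(Fano)\ge 8$.

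The approach is a close cousin of the paper's, but it is packaged differently in a way worth noting. The paper works directly with vectors ${\bf v}_{X,Y}=\chi(X)-\chi(Y)\in I(G)$ and observes that the coordinate sum is $|X|-|Y|\ge 3-4=-1$, because winning coalitions of $Fano$ have size $\ge 3$ and losing ones have size $\le 4$; writing the putative certificate as ${\bf v}_1+\cdots+{\bf v}_j+{\bf 1}={\bf 0}$ with $j\le 6$ then makes the total coordinate sum at least $-6+7>0$, a contradiction. You instead use the constant-sum structure to push \emph{everything} to the losing side via $\chi(A_i)={\bf 1}-\chi(A_i^c)$, turning the trading-transform identity into an expression of $j\cdot{\bf 1}$ as a sum of $2j-a-b$ characteristic vectors of nonempty losing coalitions, and then invoke only the single bound ``losing $\le 4$.'' The two bounds the paper uses are, of course, dual to each other precisely because $Fano$ is constant-sum, so the underlying inequality is the same; but your reformulation has two small advantages: it tracks the multiplicities $a$ and $b$ of $P$ and $\emptyset$ explicitly (the paper's phrasing implicitly peels off a single $(P,\emptyset)$ pair), and it yields the marginally sharper conclusion $j\ge 4(a+b)$, which immediately specialises to $j\ge 8$ when $a=b=1$. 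On the other hand, the paper's version of the inequality does not rely on the constant-sum property and therefore generalises more readily — indeed it is reused verbatim in the proof of Theorem~\ref{th_n_2n+3} for a non-constant-sum game.
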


\begin{proof}
We claim that any ${\bf v}\in I(G)$ has the sum of coefficients $|{\bf v}|=v_1+\ldots+v_n\ge -1$. Indeed, such a vector would be of the form ${\bf v}={\bf v}_{X,Y}$, where $X$ is winning and $Y$ is losing. Since $X$ is winning ${\bf v}$ has at least three positive ones and since $Y$ is losing it has at most four negative ones (as all coalitions of size five are winning).

Suppose now there is a sum
\[
{\bf v}_1+\ldots +{\bf v}_j+{\bf 1}={\bf 0},
\]
where ${\bf v}_i\in I(G)$, which represents a potent certificate of non-weightedness of length less than eight. In this case  $j\le 6$. By the observation above the sum of coefficients of vectors $\brow v6$ is at least $-6$. Since the sum of coefficients of ${\bf 1}$ is seven, we obtain a contradiction.
\end{proof}

\begin{theorem}
$f(HG_n)=2$ and $g(HG_n)=n+1$ for all $n$.
\end{theorem}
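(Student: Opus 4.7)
The plan is to prove both equalities separately, closely following the Fano-plane analysis in Example~\ref{projective} and Theorem~\ref{g(projective)}. Throughout I assume $n\ge 7$ and rely on two structural facts for $HG_n$: each column of $\overline H$ is orthogonal to the all-ones first column of $H$, so its entries sum to $-1$; and any two distinct minimal winning coalitions $X_i, X_j$ satisfy $|X_i\cap X_j|=(n-3)/4$ (a direct consequence of $\bar{\mathbf h}_i\cdot \bar{\mathbf h}_j=-1$), hence $|X_i\triangle X_j|=(n+1)/2$.

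For $f(HG_n)=2$, I would pick two distinct minimal winning coalitions $X_i,X_j$, choose $a\in X_i\setminus X_j$ and $b\in X_j\setminus X_i$ (both nonempty since $(n+1)/4\ge 1$), and swap them to obtain $X_i'=(X_i\setminus\{a\})\cup\{b\}$ and $X_j'=(X_j\setminus\{b\})\cup\{a\}$. Then $(X_i,X_j;X_i',X_j')$ is a trading transform. Since $|X_i'|=(n-1)/2$, the set $X_i'$ is winning if and only if it is itself a minimal winning coalition; but it lies at Hamming distance $2$ from $X_i$, while distinct minimal winning coalitions are at Hamming distance $(n+1)/2\ge 4$. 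Hence $X_i'$ and $X_j'$ are losing, so $HG_n$ is not $2$-trade robust and $f(HG_n)=2$.

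For the upper bound $g(HG_n)\le n+1$, the column-sum identity yields
$\sum_{i=1}^n{\bf v}_{X_i,X_i^c}=\sum_{i=1}^n(2\chi(X_i)-{\bf 1})=(n-1){\bf 1}-n{\bf 1}=-{\bf 1}$, so $\sum_{i=1}^n{\bf v}_{X_i,X_i^c}+{\bf v}_{P,\emptyset}={\bf 0}$, which by Proposition~\ref{tr=vXY} is the trading transform $(X_1,\ldots,X_n,P;X_1^c,\ldots,X_n^c,\emptyset)$ of length $n+1$; each $X_i^c$ is losing because $X_k\subseteq X_i^c$ would force $X_i\cap X_k=\emptyset$, contradicting $|X_i\cap X_k|=(n-3)/4\ge 1$. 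Since this certificate contains $P$ and $\emptyset$, it is potent, giving $g(HG_n)\le n+1$.

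For the matching lower bound I would repeat the strategy of Theorem~\ref{g(projective)}: prove that $|{\bf v}|\ge -1$ for every ${\bf v}\in I(HG_n)$, where $|{\bf v}|$ is the coordinate-sum. Then any potent representation ${\bf v}_1+\cdots+{\bf v}_j+{\bf 1}={\bf 0}$ would force $-n=\sum_i|{\bf v}_i|\ge -j$, so $j\ge n$ and the certificate has length at least $n+1$. The main obstacle is the coordinate-sum bound itself: since $|X|\ge(n-1)/2$ for any winning $X$, the inequality $|{\bf v}_{X,Y}|=|X|-|Y|\ge -1$ reduces to showing that every losing $Y$ satisfies $|Y|\le(n+1)/2$, or equivalently that every $(n+3)/2$-subset of $[n]$ contains some minimal winning coalition. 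For Fano this is the incidence axiom that any two points lie on a line; in general I would pursue it via a double-counting / second-moment estimate on the intersections $|Y\cap X_k|$, exploiting the matrix identity $\overline H^{\top}\overline H=(n+1)I_n-J_n$ and the 2-design parameters $(n,(n-1)/2,(n-3)/4)$ of $\{X_1,\ldots,X_n\}$.
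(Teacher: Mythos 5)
Your arguments for $f(HG_n)=2$ and for the upper bound $g(HG_n)\le n+1$ are sound and do follow the Fano-plane template, which is all the paper itself offers (its entire proof is the one-line remark that it ``repeats the respective proofs for Fano''). The swap argument for $f=2$ works because distinct minimal winning coalitions are at Hamming distance $(n+1)/2\ge 4$ while $X_i'$ is at distance $2$ from $X_i$; the column-sum computation giving $\sum_{i=1}^n {\bf v}_{X_i,X_i^c}+{\bf 1}={\bf 0}$, together with the observation that each $X_i^c$ is losing (since $|X_i\cap X_k|=(n-3)/4\ge 1$), correctly produces a potent certificate of length $n+1$.

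The lower bound is where your sketch has a genuine, and as far as I can see irreparable, gap. You correctly identify that the Fano argument requires $|{\bf v}|\ge -1$ for every ${\bf v}\in I(HG_n)$, equivalently that every losing coalition has size at most $(n+1)/2$, and you propose to establish this by a second-moment/double-counting estimate. But that claim is \emph{false} for general Hadamard games. Take $n=15$ with the Sylvester Hadamard matrix of order $16$, so that $HG_{15}$ coincides with the projective game $Pr_{3,2}$ and the minimal winning coalitions are the punctured hyperplanes $X_a=a^{\perp}\setminus\{0\}$, $a\in\mathbb{F}_2^4\setminus\{0\}$, each of size $7$. The $5$-element set $S=\{(1,0,0,0),(0,1,0,0),(0,0,1,0),(0,0,0,1),(1,1,1,1)\}$ meets every $X_a$: if $X_a\cap S=\emptyset$, then $a\cdot s=1$ for all $s\in S$, and the four unit vectors force $a=(1,1,1,1)$, which gives $a\cdot(1,1,1,1)=0$, a contradiction. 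Hence $S^c$, of size $10>(n+1)/2=8$, contains no $X_a$ and is a losing coalition, so ${\bf v}_{X_a,\,S^c}\in I(HG_{15})$ has coordinate sum $7-10=-3$. Your counting then gives only $j\ge n/3$, not $j\ge n$, and no amount of second-moment refinement will rescue it, because the inequality being targeted is simply not true. The special fact for Fano --- that every $5$-set contains a line --- comes from $Pr_{2,2}$ being a constant-sum game, and that property does not persist for larger Hadamard games. This also means the paper's appeal to ``repeating the Fano proof'' is itself suspect for the lower bound, so the statement may require either a restriction on $n$ or a genuinely different argument.
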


\begin{proof}
Repeats the respective proofs of for Fano.
\end{proof}

Let us now deal with the lower and upper bounds for $g$.

\begin{theorem}
\label{th_n_2n+3}
For any $n \geqslant 5$
$$2n+3 \leqslant g(n) \leqslant (n+1)2^{\frac{1}{2} n \log_2 n}. $$
\end{theorem}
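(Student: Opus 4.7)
The upper bound is an immediate corollary of the Criterion of Rough Weightedness (Theorem \ref{Cr_0f_nrw}): if $G$ is any $n$-player simple game that fails to be roughly weighted, then condition (a) of that criterion fails, so a potent certificate of length at most $(n+1)2^{\frac{1}{2}n\log_2 n}$ must exist. Hence $g(G)\le (n+1)2^{\frac{1}{2}n\log_2 n}$ for every such $G$, and taking the maximum over all non-roughly-weighted $n$-player games yields the stated upper bound on $g(n)$.

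For the lower bound, my plan is to exhibit, for each $n\ge 5$, an explicit non-roughly-weighted game $G_n$ on $n$ players whose shortest potent certificate has length at least $2n+3$. The template to follow is that of the Fano argument in Theorem \ref{g(projective)}, which proceeds in two stages: first one displays a concrete potent certificate realizing the claimed length, and then one verifies that no shorter one is possible via a uniform lower bound on the coordinate-sum $|{\bf v}|=v_1+\cdots+v_n$ of vectors ${\bf v}\in I(G_n)$. For Fano the bound $|{\bf v}|\ge -1$ combined with the identity ${\bf v}_1+\cdots+{\bf v}_j+{\bf 1}={\bf 0}$ from Theorem \ref{Cr_0f_nrw}(b) forces $j\ge n=7$, hence at least $n+1=8$ coalitions on each side of the certificate. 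To lift the forced length to $2n+3$ I would aim for a game in which, per coordinate, each ${\bf v}\in I(G_n)$ can contribute at most a small and controlled ``negative mass'', so that absorbing the required total shortfall of $-1$ in each of the $n$ coordinates of ${\bf 1}$ demands at least $2n+2$ summands drawn from $I(G_n)$, and therefore at least $2n+3$ coalitions including the pair $P,\emptyset$.

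The main obstacle is designing $G_n$ with these two competing properties simultaneously: it must be non-roughly-weighted, which by Proposition \ref{w&rw}(b) is equivalent to the linear system ${\bf v}\cdot{\bf x}\ge 0$, ${\bf v}\in I(G_n)$, having no non-zero non-negative solution and, equivalently by Theorem \ref{Cr_0f_nrw}, is witnessed by the existence of at least one potent certificate; but at the same time it must preclude any potent certificate of length less than $2n+3$. I anticipate the construction being a carefully engineered modification of the Fano or projective examples, in which additional players are added with cardinality constraints on winning and losing coalitions that inflate the unavoidable certificate length without introducing shortcuts. The existence half is verified by exhibiting the length-$(2n+3)$ potent certificate explicitly, while the non-existence of shorter ones is the principal technical burden, which I would discharge by a sharpened per-coordinate version of the counting argument used for Fano, ruling out any sum $\sum_{i=1}^j{\bf v}_i+{\bf 1}={\bf 0}$ with $j<2n+2$.
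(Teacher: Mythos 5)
Your upper bound argument is exactly the paper's and is correct: condition (a) of the Criterion of Rough Weightedness (Theorem~\ref{Cr_0f_nrw}) directly gives $g(G)\le(n+1)2^{\frac{1}{2}n\log_2 n}$ for every non-roughly-weighted $G$.

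For the lower bound, however, you have correctly identified the \emph{shape} of the argument (exhibit a game, exhibit a potent certificate of length $2n+3$, prove minimality via a coordinate-sum inequality), but you have not produced its content, and the two places where you gesture at content point in the wrong direction. First, the game the paper uses is not ``a carefully engineered modification of the Fano or projective examples.'' Indeed this would be a dead end: $g(Fano)=n+1$ for $n=7$, and Hadamard games likewise give only $g(HG_n)=n+1$, linear in $n$ with the wrong constant. The paper's witness $G_{n,2}$ is much more elementary: take $\{1,2\}$ and $\{3,4,5\}$ as winning, together with every coalition of more than three players, and nothing else. Second, your proposed ``per-coordinate'' accounting --- each coordinate of $\mathbf 1$ must absorb a shortfall of $-1$ and each $\mathbf v\in I(G_n)$ contributes only controlled negative mass there --- cannot by itself force $j\ge 2n+2$: each summand lies in $T^n$ and can supply $-1$ in every coordinate simultaneously, so a naive coordinate-wise count gives no bound beyond what the global $|\mathbf v|\ge-1$ inequality already yields, namely $j\ge n$. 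The extra factor of two in the paper comes from a different, structural observation. From $\sum_i|\mathbf v_i|=-n$ and $|\mathbf v_i|\ge -1$ one gets at least $n$ indices with $|\mathbf v_i|=-1$, and for $G_{n,2}$ every such $\mathbf v_i$ must be $\mathbf v_{\{1,2\},Y}$ with $Y$ a losing $3$-set. Hence players $1$ and $2$ each appear at least $n+1$ times among the winning side $X_1,\dots,X_k,P$. Because $\{1,2\}$ is winning, no losing coalition contains both $1$ and $2$, so matching those occurrences requires at least $n+1$ losing coalitions containing $1$ and a disjoint collection of at least $n+1$ containing $2$, giving $k\ge 2n+2$ and total length $\ge 2n+3$. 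This doubling mechanism --- a two-player winning coalition whose members can never cohabit a losing coalition --- is the essential idea that your outline is missing.
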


\begin{proof}
Due to Theorem~\ref{Cr_0f_nrw} we need only to take care of the lower bound. For this we need to construct a game $G$ with $n$ players such that $g(G)=2n+3$.

Let us define the game $G_{n,2} = ([n], W)$ where 
\begin{itemize}
\item $\{ 1, 2 \} \in W$ and $ \{ 3, 4, 5 \} \in W;$
\item if $|S|>3$ then $S \in W.$
\end{itemize}

Note that all losing coalition have cardinality at most three.

We note that the trading transform
\begin{align*}
{\cal T}=&\{\{1, 2 \}^n, \{ 3, 4 , 5 \}^{n+2}, P; \{2, 3, 5 \}^{3}, \{2, 3, 4 \}^{3},\\
&\underbrace{\{2, 3, 6 \}, \ldots ,\{2, 3, n \}}_{n-5},\{1, 3, 4 \}, \{1, 3, 5\},\{1, 4, 5\}^{n-1},\emptyset\}
\end{align*}
is a potent certificate of non-weightedness for $G$. Its length $2n+3$  is minimal. To prove this we will use the idea introduced in the proof of Theorem~\ref{g(projective)}. Since all losing coalition have cardinality at most three, any ${\bf v}\in I(G)$ has the sum of coordinates $v_1+\ldots+v_n\ge -1$ and that all such vectors ${\bf v}$ with $v_1+\ldots+v_n= -1$ have the form ${\bf v}_{\{1,2\},Y}$ for $Y$ being a losing $3$-player coalition.

Suppose now there is a sum
\[
{\bf v}_1+\ldots +{\bf v}_k+{\bf 1}={\bf 0},
\]
where ${\bf v}_i\in I(G)$, which represents a potent certificate
$
{\cal T}= ( X_{1}, \ldots , X_{k}, P; Y_{1}, \ldots, Y_{k},\emptyset).
$
Due to the comment above, at least $n$ vectors among $\brow vk$ must have the sum of coordinates $-1$ and hence be of the form ${\bf v}_{\{1,2\},Y}$, where $Y$ is a losing 3-player coalition. This means that there are at least $n$ sets $\{1,2\}$ among $\row Xk$. Add the grand coalition and we obtain that the union $X_{1}\cup  \ldots \cup X_{k}\cup P$ has at least $n+1$ elements 1 and at least $n+1$ elements 2. At the same time no losing coalition can contain both 1 and 2. Hence we will need at least $2n+2$ losing coalitions $\row Yk$ to achieve the equality $X_{1}\cup  \ldots \cup X_{k}\cup P=Y_{1}\cup  \ldots \cup Y_{k}\cup \emptyset$. Hence ${\cal T}$ is minimal.
\end{proof}



\section{Further properties of functions $f$ and $g$.}

What can we say about the relation between $f(n)$ and $g(n)$? One thing that can be easily observed is given in the following theorem.

\begin{theorem}
$f(n)\le g(n)-1$.
\end{theorem}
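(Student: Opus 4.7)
The plan is to prove the pointwise strengthening $f(G)\le g(G)-1$ for every non-roughly-weighted game $G$ on $n$ players; the theorem then follows by taking maxima over the class of non-roughly-weighted $n$-player games. I would begin with a shortest potent certificate of non-weightedness
\[
(X_1,\ldots,X_{k-1},P;\,Y_1,\ldots,Y_{k-1},\emptyset),
\]
where $k=g(G)$, each $X_i$ is winning and each $Y_j$ is losing. By Proposition~\ref{tr=vXY} this is a trading transform, so separating out the contributions of $P$ and $\emptyset$ yields
\[
\sum_{i=1}^{k-1}\chi(X_i)+{\bf 1}\;=\;\sum_{j=1}^{k-1}\chi(Y_j).
\]
Read coordinatewise, this says that every player $a\in[n]$ occurs in the $Y_j$'s exactly one more time than in the $X_i$'s; in particular, every player lies in at least one $Y_j$.

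The idea is to absorb this uniform unit of excess into the losing side, producing a certificate of non-weightedness of length one less. For each $a\in[n]$ I would choose any index $j(a)\in\{1,\ldots,k-1\}$ with $a\in Y_{j(a)}$, and set
\[
Y_j'\;:=\;Y_j\setminus\{a:j(a)=j\}\qquad (j=1,\ldots,k-1).
\]
Each $Y_j'\subseteq Y_j$ is losing by monotonicity, and a coordinatewise count gives $\sum_j\chi(Y_j')=\sum_j\chi(Y_j)-{\bf 1}=\sum_i\chi(X_i)$. Hence $(X_1,\ldots,X_{k-1};\,Y_1',\ldots,Y_{k-1}')$ is a trading transform of length $k-1$ with all $X_i$ winning and all $Y_j'$ losing, i.e.\ a certificate of non-weightedness for $G$. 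Therefore $f(G)\le k-1=g(G)-1$, and maximizing over non-roughly-weighted $n$-player games gives $f(n)\le g(n)-1$.

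The only subtlety to appreciate is that the two ``free'' coalitions $P$ and $\emptyset$ of a potent certificate contribute precisely the all-ones vector ${\bf 1}$ to the balance of characteristic vectors, and this single unit per coordinate is exactly what can be pared away from the $Y$-side. No injective matching (for instance via Hall's theorem) is required, because the assignment $a\mapsto j(a)$ may be chosen completely freely.
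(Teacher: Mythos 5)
Your proposal is correct and takes essentially the same approach as the paper: the paper works in the ideal $I(G)$, absorbing each $\mathbf{e}_i$ into some $\mathbf{v}_j$ having a $-1$ in position $i$, while you equivalently delete each player from one of the losing coalitions $Y_j$, using monotonicity to keep the result losing. These are two descriptions of the identical operation, so the arguments coincide.
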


\begin{proof}
Suppose $g(n)$ is finite and there is a sum
\[
{\bf v}_1+\ldots +{\bf v}_m+{\bf 1}={\bf 0}, \qquad m=g(n)-1,
\]
where ${\bf v}_i\in I(G)$, which represents a potent certificate of non-weightedness of length $g(n)$. We will show that $\brow vm$ can absorb ${\bf 1}={\bf e}_1+\ldots+{\bf e}_n$ and remain in $I(G)$. Let us start with ${\bf e}_1$. One of the vectors, say ${\bf v}_i$, will have $-1$ in the first  position. Then we replace ${\bf v}_i$ with ${\bf v}_i+{\bf e}_1$. The new vector is again in $I(G)$. It is clear that we can continue absorbing ${\bf e}_i$'s until all are absorbed.
\end{proof}

Let us talk about duality in games. 
%
The dual game of a game $G=(P,W)$ is defined to  be $G^{*} = (P,L^{c}).$ This is to say that in the game $G^{*}$ dual to a game $G$ the winning coalitions are exactly the complements of losing coalitions of $G$.

Shapley \cite{Sh} proved that
for any simple game $G$:
\begin{enumerate}
\item[(a)] $G=G^{**}$.
\item[(b)] $G^{*}$ is proper if and only if $G$ is strong.
\item[(c)] $G^{*}$ is strong if and only if $G$ is proper.
\end{enumerate}
The operation of taking the dual is known to  preserve both weightedness and rough weightedness \cite{TZ}. 
%
%
These well-known facts we will sometimes use without quoting them formally.

\begin{theorem}
Let g be a simple game, then $f(G)=f(G^*)$ and $g(G) = g(G^*).$
\end{theorem}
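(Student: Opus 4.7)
The plan is to exploit the natural bijection between certificates of non-weightedness for $G$ and those for $G^*$ given by complementation. Recall that $W(G^*)=\{X^c : X \in L(G)\}$ and consequently $L(G^*)=\{X^c : X \in W(G)\}$.

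First I would prove the following key lemma: if $(X_1,\ldots,X_j;Y_1,\ldots,Y_j)$ is a trading transform on $[n]$, then so is $(Y_1^c,\ldots,Y_j^c;X_1^c,\ldots,X_j^c)$. This is a routine verification: for each player $a\in[n]$,
\[
|\{i : a\in Y_i^c\}| = j - |\{i : a\in Y_i\}| = j - |\{i : a\in X_i\}| = |\{i : a\in X_i^c\}|,
\]
where the middle equality uses that the original sequence is a trading transform.

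Next I would apply this to a shortest certificate of non-weightedness for $G$. If $\mathcal{T}=(X_1,\ldots,X_j;Y_1,\ldots,Y_j)$ with $X_i\in W(G)$ and $Y_i\in L(G)$ is such a certificate, then the dual sequence $\mathcal{T}^*=(Y_1^c,\ldots,Y_j^c;X_1^c,\ldots,X_j^c)$ is a trading transform by the lemma, and by the description of $W(G^*)$ and $L(G^*)$ above, each $Y_i^c$ is winning in $G^*$ and each $X_i^c$ is losing in $G^*$. Hence $\mathcal{T}^*$ is a certificate of non-weightedness for $G^*$ of the same length $j$, giving $f(G^*)\le f(G)$. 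Applying the same construction to $G^*$ and using Shapley's $G^{**}=G$ produces the reverse inequality, so $f(G)=f(G^*)$.

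For the equality $g(G)=g(G^*)$, the additional observation is that complementation interchanges $P$ and $\emptyset$. Thus $P$ appears among $X_1,\ldots,X_j$ if and only if $\emptyset$ appears among $X_1^c,\ldots,X_j^c$, and $\emptyset$ appears among $Y_1,\ldots,Y_j$ if and only if $P$ appears among $Y_1^c,\ldots,Y_j^c$. Consequently $\mathcal{T}$ is a potent certificate for $G$ if and only if $\mathcal{T}^*$ is a potent certificate for $G^*$, and the same length-preserving argument yields $g(G^*)\le g(G)$; reversing the roles of $G$ and $G^*$ completes the proof. There is no real obstacle here—the content is entirely contained in checking that complementation is a length-preserving involution between the relevant classes of certificates, with potency preserved because $P^c=\emptyset$.
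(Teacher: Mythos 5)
Your proof is correct and takes essentially the same approach as the paper: complementation $\mathcal{T}\mapsto\mathcal{T}^*$ is a length-preserving bijection between certificates of non-weightedness for $G$ and for $G^*$, and since $P^c=\emptyset$ it also preserves potency, giving both equalities. You actually spell out more detail than the paper (which leaves the trading-transform verification implicit and disposes of the $g$ case with ``similar''), and your intermediate inequality $f(G^*)\le f(G)$ has the direction written correctly, whereas the paper has the inequality direction transposed in an intermediate step, though the final conclusion is unaffected.
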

\begin{proof}
Firstly, we shall prove the statement about $f$. Let $G=(P,W)$ be a simple game and ${\cal T} = (X_1, \ldots, X_k;Y_1,\ldots,Y_k)$ be a certificate of non-weightedness of $G,$ then a sequence of even length ${\cal T}^* = (Y_1^c,\ldots,Y_k^c;X_1^c, \ldots, X_k^c)$ will be a trading transform for $G^*$. Indeed, it is not difficult to see that  $X_1^c, \ldots, X_k^c$ are loosing coalitions in $G^{*}$ and $Y_1^c,\ldots,Y_k^c$ are winning. Hence $f(G)\le f(G^*)$. However, due to Theorem 1 (a) we have $f(G^*)\le f(G^{**})=f(G)$.
Proof of the second part of the theorem is  similar.
\end{proof}


\section{Games with a small number of players}

\begin{definition}
We say that a player in a game is a  {\em dictator} if a coalition is winning if and only if this player belongs to it. If all coalitions containing a particular player are winning (but there may be other winning coalitions), this player is called a {\em weak dictator}. A player will be called a {\em vetoer} if she is contained in the intersection of all winning coalitions.
\end{definition}

\begin{proposition}\label{Dictator}
Suppose $G$ is a simple game with $n$ players. Then $G$ is roughly weighted if any one of the following three conditions holds:
\begin{enumerate}
\item[(a)] $G$ has a weak dictator.
\item[(b)] $G$ has a vetoer.
\item[(c)] $G$ has a  losing coalition that consists of $n-1$ players.
\end{enumerate}
\end{proposition}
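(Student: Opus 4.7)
The plan is to exhibit an explicit rough voting representation in each of the three cases; all three use a weight vector concentrated on a single player. I will handle (b) first, reduce (c) to (b) by monotonicity, and then settle (a) by shifting the quota down to $0$.

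For part (b), let $i$ be a vetoer. I propose the rough voting representation
\[
[\,1;\,0,\ldots,0,\underbrace{1}_{\text{position }i},0,\ldots,0\,].
\]
Every coalition $X$ with $i\notin X$ has total weight $0<1=q$, so the definition of rough weightedness forces $X\in L$; this is exactly what being a vetoer delivers, since all winning coalitions contain $i$. Every coalition $X$ with $i\in X$ has total weight exactly $1=q$, hence sits on the threshold, so its status is unconstrained and automatically consistent with $G$.

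Part (c) will reduce immediately to (b): if $X=P\setminus\{i\}$ is losing, then by monotonicity every subset of $X$ is losing, so every winning coalition must meet $\{i\}$, which makes $i$ a vetoer. Thus (b) applies to the game.

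For part (a), let $i$ be a weak dictator. I use the same weight vector but shift the quota to $0$:
\[
[\,0;\,0,\ldots,0,\underbrace{1}_{\text{position }i},0,\ldots,0\,].
\]
Now every coalition $X$ with $i\in X$ has weight $1>0=q$, forcing $X\in W$, which is precisely the defining property of a weak dictator. Every coalition $X$ with $i\notin X$ has weight exactly $0=q$, on the threshold, so its status is unconstrained and whatever $G$ specifies is allowed.

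The only mild obstacle is checking formal admissibility of the chosen data against the definition on page~\pageref{preliminaries}: in all three cases the weights are non-negative and the pair $(w_1,\ldots,w_n,q)$ is not identically zero, and in part~(a) one must use the fact that rough-weightedness (unlike weightedness) allows a non-positive quota, since the definition only requires $q\in\mathbb{R}$ rather than $q>0$. With this observation the three short verifications above complete the proof.
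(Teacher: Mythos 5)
Your proof is correct, and for parts (a) and (c) it coincides with the paper's argument: (a) uses precisely the representation $[0; 0,\ldots,1,\ldots,0]$ with quota shifted to $0$, and (c) reduces to (b) by the same observation that a losing coalition of size $n-1$ forces the remaining player to be a vetoer. Where you genuinely diverge is part (b). The paper proves (b) by contradiction, invoking the criterion of rough weightedness: it assumes a potent certificate of non-weightedness $(\row Xk, P; \row Yk, \emptyset)$ exists and observes that a vetoer would have to lie in every winning coalition of the certificate yet cannot lie in $\emptyset$. This argument therefore leans on Theorem~\ref{Cr_0f_nrw}, which in turn rests on the linear-algebraic Theorem~\ref{ram1theorem}. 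You instead exhibit an explicit rough voting representation $[1; 0,\ldots,1,\ldots,0]$ for the vetoer and verify the two implications in the definition directly. Your route is more elementary --- it needs nothing beyond the definitions --- and it makes (a), (b), (c) uniform, all three being handled by a weight vector concentrated on one player with the quota chosen as $0$ or $1$. The paper's route, while heavier, illustrates how the certificate criterion can be deployed, which is thematically consonant with the rest of the paper. Both proofs are sound; your remark that roughly weighted games permit quota $q\le 0$ (whereas weighted games require $q>0$) is exactly the delicate point in (a), and you identify it correctly.
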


\begin{proof}
To prove (a) we simply give weight 1 to the weak dictator and 0 to everybody else. The rough quota must be set to 0 (this is possible since we have a non-zero weight). To prove (b) suppose that $v$ is a vetoer for this game and that there exists a potent certificate of non-representability 
\begin{equation}
\label{certificate_of_non-weightedness}
{\cal T}=(\row Xk,P;\row Yk,\emptyset),
\end{equation}
  Then $v$ belongs to all winning coalitions $\row Xk, P$ of this trading transform but it cannot belong to all losing coalitions since $\emptyset$ does not contain $v$. This contradiction proves (b). Part (c) follows from (b) since if $Y$ is a losing coalition of size $n-1$, then the player $v$ such that $\{v\}=P\setminus Y$ is a vetoer.
\end{proof}

\begin{theorem}\label{2_proper}
Let $G$ be a simple proper game with $n$ players. If $G$ has a two-player winning coalition, then $G$ is a roughly weighted game.
\end{theorem}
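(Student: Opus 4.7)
The plan is to exhibit an explicit rough voting representation built directly from the hypothesized two-player winning coalition. Let $\{a,b\}\in W$ be a winning coalition of size two. I would propose the weight system
\[
w_a=w_b=1,\qquad w_i=0\text{ for all }i\notin\{a,b\},\qquad q=1,
\]
and verify that $[1;\row wn]$ is a rough voting representation of $G$ in the sense of the definition on page preceding Example~\ref{projective}.

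The verification splits into two routine checks. First, if $\sum_{i\in X}w_i>1$, then since the only weights are $0$ and $1$ and the total weight is at most $2$, we must have $\sum_{i\in X}w_i=2$, which forces $\{a,b\}\subseteq X$. By monotonicity of $W$ (applied to the assumed winning coalition $\{a,b\}$), $X\in W$. Second, if $\sum_{i\in X}w_i<1$, then the total weight is $0$, so $X\cap\{a,b\}=\emptyset$, i.e.\ $X\subseteq P\setminus\{a,b\}$. Here I would invoke properness: since $\{a,b\}\in W$, its complement $P\setminus\{a,b\}$ belongs to $L$, and by the downward monotonicity of $L$ any subset of a losing coalition is losing, so $X\in L$.

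There is no real obstacle; the statement is essentially a one-line construction once the right weights are guessed. The only subtlety worth flagging is that both hypotheses are genuinely used in exactly one place each: properness supplies the losing coalition $P\setminus\{a,b\}$ needed to handle the weight-$0$ case, and the assumption $\{a,b\}\in W$ (together with monotonicity) handles the weight-$2$ case. Coalitions of weight exactly $1$ (those containing precisely one of $a,b$) are left unconstrained, which is exactly what the rough voting definition permits.
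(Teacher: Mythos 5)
Your proof is correct, and it takes a genuinely different route from the paper's. The paper's proof works through its own criterion of rough weightedness (Theorem~\ref{Cr_0f_nrw}): assuming a potent certificate of non-weightedness $(X_1,\ldots,X_k,P;\,Y_1,\ldots,Y_k,\emptyset)$ exists, it calls the two players of the hypothesized winning pair \emph{heavy}, shows (via properness on one side and monotonicity on the other) that every winning coalition contains at least one heavy player while every losing coalition contains at most one, and then counts heavy-player occurrences --- at least $k+2$ on the winning side versus at most $k$ on the losing side --- to contradict the multiset balance a trading transform requires. You instead construct an explicit rough voting representation, $w_a=w_b=1$, $w_i=0$ for $i\notin\{a,b\}$, $q=1$, and verify the definition directly. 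The two hypotheses do the same two jobs in both arguments (properness handles the weight-$0$ coalitions, monotonicity the weight-$2$ ones), but your version is shorter, self-contained, and constructive --- it exhibits the weights rather than inferring their existence from the absence of a certificate --- whereas the paper's version is evidently chosen to showcase the certificate machinery that is its running theme.
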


\begin{proof}
Suppose $G$ has a two-player winning coalition. Players from this coalition we will call heavy. Since $G$ is proper  every winning coalition $X$ must contain at least one heavy player and any losing coalition $Y$ can contain at most one. Then a potent certificate of non-weightedness  (\ref{certificate_of_non-weightedness}) cannot exist since all coalitions $\row Xk,P$ will contain at least $k+2$ heavy players while coalitions $\row Yk,\emptyset $ will contain at most $k$.  Therefore $G$ is roughly weighted.
\end{proof}

\begin{corollary}
\label{n-2_strong}
Let $G$ be a simple strong game with n players. If $G$ has a  losing coalition of cardinality $n-2$, then $G$ is a roughly weighted game.
\end{corollary}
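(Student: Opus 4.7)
The plan is to deduce this corollary directly from Theorem~\ref{2_proper} by passing to the dual game. The key observations are the three facts collected just before Theorem~\ref{2_proper}: (i) $G^{**} = G$, (ii) $G^*$ is proper iff $G$ is strong, and (iii) rough weightedness is preserved under taking duals.

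First I would translate the hypothesis to the dual side. If $Y \subseteq P$ is a losing coalition of $G$ with $|Y|=n-2$, then by definition of the dual $G^* = (P, L^c)$ the complement $Y^c$ is a winning coalition of $G^*$, and $|Y^c| = 2$. Moreover, since $G$ is strong, fact (ii) gives that $G^*$ is proper. Thus $G^*$ is a proper game with a two-player winning coalition.

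Now I would invoke Theorem~\ref{2_proper} applied to $G^*$: it follows that $G^*$ is roughly weighted. Finally, by fact (iii) rough weightedness is preserved under duality, so $G = G^{**}$ is roughly weighted as well.

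There is no real obstacle here; the corollary is essentially the dual statement of Theorem~\ref{2_proper}, and the only thing to be careful about is ensuring the correspondence ``losing coalition of size $n-2$ in $G$ $\leftrightarrow$ winning coalition of size $2$ in $G^*$'' together with the preservation of properness/strongness under duality are applied in the right direction. Both are immediate from the definitions cited in the text, so the proof is essentially a one-line dualization argument.
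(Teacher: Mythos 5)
Your proof is correct and follows exactly the same route as the paper: dualize, observe that $G^*$ is proper with a two-player winning coalition (the complement of the size-$(n-2)$ losing coalition), apply Theorem~\ref{2_proper}, and conclude via preservation of rough weightedness under duality.
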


\begin{proof}
The dual game $G^*$ is proper and it will have a two-player winning coalition, hence
Theorem~\ref{2_proper} implies this corollary since the operation of taking a dual game preserves rough weightedness. 
\end{proof}

\begin{proposition}\label{4players}
Every game with $n\leqslant 4$ players is a roughly weighted game.
\end{proposition}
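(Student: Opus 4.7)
The plan is to do a case analysis on $m$, the minimum cardinality of a winning coalition in $G$. The extreme cases $m=1$ and $m=n$ fall immediately to Proposition~\ref{Dictator}: when $m=1$ the unique winning singleton provides a weak dictator and part~(a) applies; when $m=n$ only the grand coalition is winning, every player is a vetoer and part~(b) applies. These settle every game with $n\le 2$ together with the extreme sub-cases for $n\in\{3,4\}$, leaving only the pairs $(n,m)\in\{(3,2),(4,3),(4,2)\}$ to address.

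The two cases with $m=n-1$ can be treated uniformly. If some coalition of size $n-1$ is losing, Proposition~\ref{Dictator}(c) applies. Otherwise every coalition of size at least $n-1$ is winning, and combined with the fact that every coalition of size less than $m=n-1$ is losing, $G$ is exactly the weighted game $[n-1;1,\ldots,1]$ and is therefore roughly weighted. This disposes of $(3,2)$ and $(4,3)$ at once.

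The genuinely new case is $(n,m)=(4,2)$. If $G$ is proper, Theorem~\ref{2_proper} applies directly. If $G$ is not proper, pick a winning coalition $X$ with winning complement $X^c$; since $m=2$ rules out winning singletons and $\emptyset$ is always losing, the sizes force $|X|=|X^c|=2$. Relabeling, take $X=\{1,2\}$ and $X^c=\{3,4\}$. Any 3-element subset of $[4]$ avoids exactly one player and therefore contains either $\{1,2\}$ or $\{3,4\}$, so by monotonicity every coalition of size at least $3$ is winning. Consequently $[2;1,1,1,1]$ is a rough voting representation: singletons and $\emptyset$ have total weight strictly less than $2$ (losing), coalitions of size at least $3$ have total weight strictly greater than $2$ (winning), and the six 2-element coalitions sit exactly on the threshold, so they may be classified either way.

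The only delicate step is the non-proper sub-case at $(n,m)=(4,2)$, where neither Theorem~\ref{2_proper} nor any of the dictator or vetoer hypotheses is directly available. The key observation is that non-properness combined with $m=2$ pins the complementary pair $(X,X^c)$ to sizes $(2,2)$; after that, a short pigeonhole argument on the four 3-element subsets of $[4]$ forces every coalition of size at least $3$ into $W$, and the uniform rough representation $[2;1,1,1,1]$ certifies rough weightedness.
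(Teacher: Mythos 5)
Your proof is correct, but it takes a genuinely different route from the paper. The paper dispenses with $n\le 3$ immediately by citing the classical fact that all such games are weighted, and then for $n=4$ applies Proposition~\ref{Dictator}(a) and~(c) to reduce to the case where every winning coalition has at least two players and every losing coalition has at most two; from there it invokes the potent-certificate criterion and shows that the multisets $X_1\cup\cdots\cup X_k\cup P$ and $Y_1\cup\cdots\cup Y_k\cup\emptyset$ would have different cardinalities (at least $2k+4$ against at most $2k$), a contradiction. You instead run a finer case analysis on the minimum winning-coalition size $m$, handle $m=n-1$ by either Proposition~\ref{Dictator}(c) or the observation that $G$ collapses to $[n-1;1,\ldots,1]$, split $(n,m)=(4,2)$ into the proper sub-case (Theorem~\ref{2_proper}) and the non-proper sub-case, and in the latter produce the explicit rough representation $[2;1,1,1,1]$ after a pigeonhole argument on the $3$-element subsets. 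Your argument is longer but more constructive: it exhibits concrete (rough) weights in each branch and never appeals to the potent-certificate machinery except implicitly through the quoted propositions. The paper's argument is shorter and more uniform once the reduction is in place, but it is existential rather than constructive. One small observation worth making about the paper's own reduction: once you know every winning coalition has size $\ge 2$ and every losing one has size $\le 2$, the assignment $[2;1,1,1,1]$ already certifies rough weightedness directly, so the counting argument via potent certificates is a slightly roundabout way of reaching the same conclusion — your explicit construction in the $(4,2)$ non-proper sub-case is exactly this observation in disguise.
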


\begin{proof}
It is obvious for $n=1,2,3$ as all games in this case are known to be weighted.  Let $G$ be a 4-player simple game. If $G$ has a one-player winning coalition it is roughly weighted by Proposition~\ref{Dictator}(a). Also, if $G$ has a 3-player losing coalition, it is roughly weighted by Proposition~\ref{Dictator}(c). Thus we may assume that all winning coalitions have at least two players and all losing coalitions have at most two players.

Suppose, that there is a potent certificate of non-weightedness of the form (\ref{certificate_of_non-weightedness}).
As we mentioned, we may assume that $ |X_{i}| \ge 2$ and $|Y_{i}|\le 2$ for every $i=1,\ldots,k$. But in this case the multisets $X_{1}\cup \ldots \cup X_{k}\cup P$ and $Y_{1}\cup \ldots\cup Y_{k}\cup \emptyset$ have different cardinalities and cannot be equal which is a contradiction.
\end{proof}

\begin{theorem}
\label{rough_weights_for 5}
Any simple game $G$ with five players, which is either proper or strong,  is a roughly weighted game.
\end {theorem}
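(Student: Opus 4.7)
My plan is to reduce to the proper case via duality, then invoke the earlier ``small game'' propositions to pin down coalition sizes, and finally derive a contradiction from a cardinality count against any hypothetical potent certificate. Since the operation $G\mapsto G^{*}$ preserves rough weightedness and (by Shapley) interchanges properness with strongness, I would first reduce to the case where $G$ itself is proper. In this setting, if $G$ has a singleton winning coalition then by monotonicity that player is a weak dictator and Proposition~\ref{Dictator}(a) applies; if $G$ has a losing $4$-coalition then Proposition~\ref{Dictator}(c) applies; if $G$ has a two-player winning coalition then Theorem~\ref{2_proper} applies. So I may assume every winning coalition has size at least $3$ and every losing coalition has size at most $3$.

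Under these assumptions I would invoke the Criterion of Rough Weightedness (Theorem~\ref{Cr_0f_nrw}) and show that no potent certificate
$$
\mathcal{T}=(X_1,\ldots,X_k,P;\,Y_1,\ldots,Y_k,\emptyset)
$$
can exist. Equating total cardinalities on each side of the trading transform (the consequence of the fact that each player occurs equally often on both sides) gives
$$
\sum_{i=1}^{k}|X_i|+5=\sum_{i=1}^{k}|Y_i|.
$$
But the size restrictions above force $\sum|X_i|\geq 3k$ and $\sum|Y_i|\leq 3k$, and combining these yields $3k+5\leq 3k$, a contradiction. Consequently no potent certificate of non-weightedness exists and $G$ is roughly weighted.

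The only step requiring care is the duality reduction from the strong case to the proper case, and it is justified by the two facts recalled just before the theorem (preservation of rough weightedness under $G\mapsto G^{*}$, and Shapley's interchange of properness and strongness). Everything else is a short parity-of-sizes argument that exploits the smallness of $n=5$, so I do not anticipate any real obstacle.
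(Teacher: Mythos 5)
Your proof is correct and is essentially the same as the paper's, with a small reorganization. The paper does not first reduce to the proper case by duality; instead it works with the proper and strong cases symmetrically: from the cardinality identity $\sum|X_i|+5=\sum|Y_i|$ together with $|X_i|\ge 2$ and $|Y_i|\le 3$ (the latter two obtained exactly as in Proposition~\ref{4players} via Proposition~\ref{Dictator}), it deduces that some $|X_i|=2$ and some $|Y_j|=3$ must occur, and then applies Theorem~\ref{2_proper} in the proper case and Corollary~\ref{n-2_strong} in the strong case. You instead rule out the $|X_i|=2$ possibility up front (by Theorem~\ref{2_proper}, after reducing to the proper case via duality) and then let the cardinality count produce the contradiction $3k+5\le 3k$. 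The two versions are contrapositives of each other and rely on the same ingredients (Proposition~\ref{Dictator}, Theorem~\ref{2_proper}/Corollary~\ref{n-2_strong}, the player-count invariant of a potent certificate), so there is no substantive difference.
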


\begin{proof}
Assume on the contrary that there exists a game $G=([5],W)$, which is  not roughly weighted.  It means that there is a potent certificate of non-weightedness (\ref{certificate_of_non-weightedness}). As in the proof of the previous theorem we may assume that $|X_i| \geq 2$ and $|Y_i|\le 3$ for all $i=1,\ldots,k$. We note that there must exist $i\in [k]$ such that $|X_i| = 2$. If this does not hold, then $|X_i| \geq 3$ for all $i$ and the multisets $X_{1}\cup \ldots \cup X_{k}\cup P$ and $Y_{1}\cup \ldots\cup Y_{k}\cup \emptyset$ cannot be equal. Similarly, there must exist $j\in [k]$ for which $|Y_j| = 3$.
Now by Theorem~\ref{2_proper} in the proper case and its corollary in the strong case we deduce that $G$ is roughly weighted.
\end{proof}

The following example shows that the requirement for the game to be strong or proper cannot be discarded.

\begin{example}[Game with five players that is not roughly weighted]
\label{5-13}
We define the game $G=(P,W)$, where $P=[5]$,  by defining the set of minimal winning coalitions to be
\[
W^m=\{\{1,2\}, \{3,4,5\}\}.
\]
Then the trading transform
\begin{equation}
\label{certificate}
{\cal T}=\{\{1,2\}^5, \{3,4,5\}^7, P; \{2,3,5\}^4, \{2,3,4\}^2, \{1,3,4\}^2, \{1,4,5\}^4,\emptyset\}.
\end{equation}
is a potent certificate of non-weightedness. Indeed, all four coalitions $ \{2,3,5\}$, $\{2,3,4\}$, $\{1,3,4\}$, $\{1,4,5\}$ are losing since they do not contain $\{1,2\}$ or  $\{3,4,5\}$. 
\end{example}

We note that any simple constant sum game with five players is weighted \cite{vNM}.

\begin{theorem}
\label{rough_weights_for 6}
Any simple constant sum game with six players  is roughly weighted.
\end{theorem}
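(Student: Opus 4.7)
The plan is to combine the earlier reductions (Propositions and Theorems already in the excerpt) with a direct cardinality count applied to a putative potent certificate. Since a constant sum game is both proper and strong, I would argue by contradiction: assume $G$ is a $6$-player constant sum game that is not roughly weighted, and use the Criterion of Rough Weightedness (Theorem~\ref{Cr_0f_nrw}) to produce a potent certificate
\[
{\cal T}=(X_1,\ldots,X_k,P;\,Y_1,\ldots,Y_k,\emptyset),
\]
with $X_i$ winning and $Y_i$ losing for every $i$.

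First I would narrow down the sizes that can appear in $G$. If any singleton were winning, that player would be a weak dictator and $G$ would be roughly weighted by Proposition~\ref{Dictator}(a); dually, by the constant sum property, no $5$-player coalition can be losing (its complement would be a winning singleton). Next, if any $2$-player coalition were winning, Theorem~\ref{2_proper} applied to the proper game $G$ would again yield rough weightedness; dually, no $4$-player coalition can be losing. Thus in the remaining case every winning coalition has size at least $3$ and every losing coalition has size at most $3$.

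With this structural information in hand, the contradiction is obtained by comparing cardinalities of the multisets on the two sides of the trading transform. The defining condition of a trading transform implies
\[
\sum_{i=1}^{k}|X_i|+|P|=\sum_{i=1}^{k}|Y_i|+|\emptyset|,
\]
so $\sum_{i=1}^{k}|X_i|+6=\sum_{i=1}^{k}|Y_i|$. But from $|X_i|\ge 3$ and $|Y_i|\le 3$ we get $\sum|X_i|\ge 3k$ and $\sum|Y_i|\le 3k$, which forces $3k+6\le 3k$, a contradiction. Hence no potent certificate exists, and $G$ is roughly weighted.

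The only delicate point in this plan is making sure the two size reductions are exhaustive, i.e.\ that after ruling out winning coalitions of size $1$ and $2$ and (dually) losing coalitions of size $4$ and $5$ one really has a game satisfying the sharp inequalities $|X_i|\ge 3$ and $|Y_i|\le 3$ for \emph{every} pair in the certificate; this is immediate from the constant sum property since the grand coalition is winning and the empty coalition is losing. The rest is bookkeeping.
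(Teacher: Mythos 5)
Your proof is correct and follows essentially the same route as the paper's: use the potent-certificate criterion, invoke Theorem~\ref{2_proper} (and its dual, Corollary~\ref{n-2_strong}) to force $|X_i|\ge 3$ and $|Y_i|\le 3$, and derive a contradiction from the cardinality mismatch caused by $P$ and $\emptyset$. Your separate invocation of Proposition~\ref{Dictator} for singletons and $5$-player losing coalitions is harmless but redundant, since monotonicity already reduces those cases to the two-player (resp.\ $(n-2)$-player) cases handled by Theorem~\ref{2_proper} and its corollary.
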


\begin{proof}
Assume on the contrary, that $G$ doesn't have rough weights. Then there is a a potent certificate of non-weightedness of the form (\ref{certificate_of_non-weightedness}). 
Because $G$ is proper, by Theorem~\ref{2_proper} every $X_i$ has at least three elements.
As $G$ is also strong, so by Corollary~\ref{n-2_strong} every $Y_j$ contains at most three elements.
However, it is impossible to find such trading transform $\cal T$ under these constraints since multisets $X_{1}\cup \ldots \cup X_{k}\cup P$ and $Y_{1}\cup \ldots\cup Y_{k}\cup \emptyset$ have different cardinalities.
\end{proof}

\begin{example}[Proper game with six players that is not roughly weighted]
We define $G=(P,W),$ where $P=[6].$ Let the set of minimal winning coalitions be
\[ W^m=\{ \{ 1,2,3 \}, \{ 3,4,5 \},\{ 1,5,6 \}, \{2,4,6 \} , \{ 1,2,6 \} \}.\]
A potent certificate of  non-weightedness for this game is 
\[{\cal T} =  (\{1,2,3\},\{3,4,5\}^2,\{1,5,6\},\{2,4,6\},\{1,2,6\},P; \{1,2,4,5\}^2, \{1,3,4,6\}^2, \{2,3,5,6\}^2, \emptyset ).\]

\end{example}

\section{Conclusion and further research}

In this paper we proved a criterion of existence of rough weights in the language of trading transforms similar to the criterion of existence of ordinary weights given by Taylor and Zwicker \cite{TZ92}.  We defined two functions $f(n)$ and $g(n)$ which measure the deviation of a simple game from a weighted majority game and roughly weighted majority game, respectively. We improved the upper and lower bounds for $f(n)$ and obtain upper and lower bounds for $g(n)$.

\section*{Appendix}

\begin{proof}[Proof of Theorem~\ref{ram1theorem}]
In one direction the statement is clear: if a linear combination  (\ref{ram1}) with coefficients $\row rm$ exists, then there are no non-negative solutions for $A{\bf x}\ge {\bf 0}$, other than ${\bf x}={\bf 0}$. Indeed,  suppose ${\bf s}=(\row sn)$ is such a solution. Denote ${\bf r}=(\row rm)$. If ${\bf x}=(\row xn)\ge {\bf 0}$ is a non-zero  solution, then  (\ref{ram1}) implies $({\bf r}A){\bf x}=s_1x_1+\ldots+s_nx_n < 0$, which is impossible  since ${\bf r}(A{\bf x})\ge 0$.

Let us prove the reverse statement by induction.  For $n=1$ the matrix $A$ is an $m\times 1$ matrix system reduces to $a_{11}x_1\ge 0,\ldots, a_{m1}x_1\ge 0$. Since it has no positive solutions we have $a_{i1}<0$ for some $i$. Suppose $a_{i1}=-\frac{s}{t}$, where $s,t$ are integers and the fraction $\frac{s}{t}$ is in lowest possible terms. Then we can take $r_i=t$ and $r_j=0$ for  $j\ne i$ and obtain $\blcomb ram=ta_{i1}=-s<0$ and $s$ is an integer.

Suppose now that the statement is proved for all $m\times k$ matrices $A$ with $k<n$. Suppose now $A$ is an $m\times n$ matrix and  the system $A{\bf x}\ge{\bf 0}$ has no non-negative solutions other than ${\bf x}={\bf 0}$.

Suppose that a certain column, say the $j$th one, has no positive coefficients. Then we may drop this column and the resulting system will still have no non-zero non-negative solutions (otherwise we can take it, add $x_j=0$, and obtain a non-zero non-negative solution for the original system). By the induction hypothesis for the reduced system we can find non-negative integers $\row rm$ and a vector ${\bf u}$ such that (\ref{ram1}) is true. Then the same $\row rm$ will work also for the original system.

We may now assume that any variable has positive coefficients.  Let us consider the variable $x_1$. Its coefficients are not all negative but they are not all positive either (otherwise the system would have a non-zero non-negative solution ${\bf x}=(1,0,\ldots,0)$). Multiplying, if necessary, the rows of $A$ by positive rational numbers, we find that our system is equivalent to a system of the form
\begin{eqnarray*}
x_1- f_i(\srow xn)&\ge &0,\\
-x_1+g_j(\srow xn)&\ge &0,\\
h_p(\srow xn)&\ge &0,
\end{eqnarray*}
$i=1,\ldots,k$, $j=1,\ldots,m$, $p =1,\ldots,\ell $,
where $f_i(\srow xn)$, $g_j(\srow xn)$ and $h_p(\srow xn)$ are linear functions  in $\srow xn$, and $k\ge 1$, $m\ge 1$.
The matrix of such system has the rows:
${\bf U}_i=(1, {\bf u}_i)$,
${\bf V}_j=(-1, {\bf v}_j)$,
${\bf W}_s=(0, {\bf w}_s)$, where $i=1,\ldots,k,\ j=1,\ldots,m$, $s=1,\ldots, \ell$.
Then the following  system of $km+m+\ell$ inequalities
\begin{eqnarray*}
g_i(\srow xn)&\ge & f_j(\srow xn),\\
g_i(\srow xn)&\ge & 0,\\
h_s(\srow xn)&\ge &0,
\end{eqnarray*}
$i=1,\ldots,k,\ j=1,\ldots,m$, $s=1,\ldots, \ell$,  has no non-negative solutions other than $x_2=\ldots=x_n=0$.  Indeed, if such a solution $(x_2,\ldots, x_n)$ is found then we can set $x_1=\min g_i(\srow xn)$ and since this minimum is non-negative to obtain a non-zero solution of the original inequality.

By the induction hypothesis  there exist non-negative integers $a_{i,j}$, $d_j$, $e_t$, $ s$, where $i=1,\ldots,k,\ j=1,\ldots,m$, $t=1,\ldots, \ell$ such that at least one of these integers positive and
\begin{equation}
\label{aijds}
\sum_{j=1}^m\sum_{i=1}^k a_{i,j}({\bf u}_i+{\bf v}_j)+
\sum_{j=1}^m d_j {\bf v}_j
+\sum_{t=1}^{\ell}e_t{\bf w}_t+{\bf s}_1={\bf 0},
\end{equation}
where ${\bf s}\in \R^{n-1}$ has all its coefficients non-negative.
Let $B_i=\sum_{j=1}^ma_{i,j}$, $C_j=\sum_{i=1}^ka_{i,j}$, $d=\sum_{j=1}^m d_j$. Then $\sum_{i=1}^k B_i=\sum_{j=1}^m C_j$, and (\ref{aijds}) can be rewritten as
\begin{equation}
\label{ijcs}
\sum_{j=1}^k B_i{\bf U}_i+\sum_{j=1}^m (C_j+d_j){\bf V}_j+\sum_{t=1}^{\ell}e_t{\bf W}_t+d{\bf e}_1+(0,{\bf s}_1)={\bf 0}.
\end{equation}
If $d\ne 0$, then we have finished the proof. If not, then we found the numbers $r^1_1,\ldots, r^1_m$ such that
\begin{equation}
\label{ram2}
r^1_1{\bf a}_1+\ldots + r^1_m{\bf a}_m +(0,{\bf s}_1)={\bf 0}.
\end{equation}
Similarly, we can find the numbers $r^2_1,\ldots, r^2_m$ such that
\begin{equation}
\label{ram3}
r^2_1{\bf a}_1+\ldots+ r^2_m{\bf a}_m +({\bf s}_2, 0)={\bf 0}.
\end{equation}
But then we can set $r_i=r^1_1+r^2_1$, $i=1,\ldots, m$ and ${\bf s}=(0,{\bf s}_1)+({\bf s}_2, 0)$ and obtain (\ref{ram1}).  Now all coordinates of ${\bf s}$ are positive, hence the statement is proved.
\end{proof}

\end{document}